\newcommand\shorttitle{$S$-curvature of homogeneous Finsler spaces}
\newcommand\authors{g. shanker and s. rani}
	\ifodd\value{page}
\authors
\shorttitle
\newtheorem{theorem}{Theorem}[section]
\newtheorem{example}[theorem]{Example}
\newtheorem{cor}[theorem]{Corollary}
\newtheorem{remark}{\sc Remark}
\newtheorem{lemma}{\sc Lemma}[section]
\newtheorem{corollary}{\sc Corollary}[section]
\newtheorem{definition}{\sc Definition}[section]
\newcommand{\be}{\begin{eqnarray}}
\newcommand{\ee}{\end{eqnarray}}
\newcommand{\Be}{\begin{eqnarray*}}
	\newcommand{\Ee}{\end{eqnarray*}}
\newcommand{\bee}{\begin{equation}}
\newcommand{\eee}{\end{equation}}
\newcommand{\ba}{\begin{array}}
	\newcommand{\ea}{\end{array}}
\newcommand{\bl}{\begin{lemma}}
	\newcommand{\el}{\end{lemma}}
\newcommand{\bd}{\begin{definition}}
	\newcommand{\ed}{\end{definition}}
\newcommand{\bt}{\begin{theorem}}
	\newcommand{\et}{\end{theorem}}
\newcommand{\bp}{\begin{proof}}
	\newcommand{\ep}{\end{proof}}
\newcommand{\bi}{\begin{itemize}}
	\newcommand{\ei}{\end{itemize}}
\newcommand{\br}{\begin{remark}}
	\newcommand{\er}{\end{remark}}
\newcommand{\bc}{\begin{corollary}}
	\newcommand{\ec}{\end{corollary}}
\newcommand{\bex}{\begin{example}}
	\newcommand{\eex}{\end{example}}
\begin{document}
	\afterpage{\cfoot{\thepage}}
	\clearpage
	\date{}

	\title{\textbf{On $S$-Curvature of Homogeneous Finsler spaces with  $(\alpha, \beta)$-metrics}}
	
	\author[1]{Gauree Shanker}
	\author[1]{Sarita Rani\thanks{corresponding author, Email: saritas.ss92@gmail.com}}
	\affil[1]{\footnotesize Department of Mathematics and Statistics,
		Central University of Punjab, Bathinda, Punjab-151 001, India}
	\maketitle

	%	\maketitle
	%	\begin{center}
	%		\author{\textbf{Gauree Shanker, Sarita Rani\footnote{Corresponding author}}}
	%	\end{center}
	%	\begin{center}
	%		Department of Mathematics and Statistics\\
	%%		School of Basic and Applied Sciences\\
	%		Central University of Punjab, Bathinda, Punjab-151 001, India\\
	%		Email:  grshnkr2007@gmail.com, saritas.ss92@gmail.com
	%	\end{center}
	\begin{center}
		\textbf{Abstract}
	\end{center}
	\begin{small}
		The study of curvature properties of homogeneous Finsler spaces with $(\alpha, \beta)$-metrics is one of the central problems in Riemann-Finsler geometry.  In the present paper, the existence of invariant vector fields on homogeneous Finsler spaces with square $(\alpha, \beta)$-metric and Randers changed square $(\alpha, \beta)$-metric is proved. Further, an explicit formula for $S$-curvature of these $(\alpha, \beta)$-metrics is established. Finally, using the formula of $S$-curvature, the mean Berwald curvature of afore said  $(\alpha, \beta)$-metric is calculated. 
	\end{small}\\
	\textbf{Mathematics Subject Classification:} 22E60, 53C30, 53C60.\\
	\textbf{Keywords and Phrases:} Homogeneous Finsler space, square metric, Randers change,   invariant vector field, $S$-curvature, mean Berwald curvature.

	\section{Introduction}
	According to S. S. Chern \cite{Chern1996}, Finsler geometry is just Riemannian geometry without quadratic restriction. Finsler geometry is an interesting and active area of research for both pure and applied reasons \cite{Antonelli,S.I.AmaH.Nag,RGarGWil1996,GKro1934}.  In 1972, M. Matsumoto \cite{M.Mat1972}  introduced the concept of $(\alpha, \beta)-$metrics which are the generalizations of Randers metric introduced by G. Randers \cite{Randers}. Z. Shen \cite{ZShen1997} introduced the notion of $S$-curvature, a non-Riemannian quantity, for a comparison theorem in Finsler geometry. 
	%For a general Finsler metric $F$, C. Shibata \cite{C.Sh1984} introduced the notion of $\beta-$change in $F,$ i.e., $\bar{F}=f(F, \beta)$ in 1984.
	%	$S$-curvature is a non-Riemannian quantity.
	It is non-Riemannian  in the sense  that any Riemannian manifold has vanishing $S$-curvature.   
	One special class of Finsler spaces is homogeneous and symmetric Finsler spaces.  It is an active area of research these days. Many authors \cite{SDeng2009, SDenXWan2010, ZHuSDen2012, GK2019_1, GS2018, MXuSDen2015} have worked in this area.  The main aim of this paper is to establish an explicit formula for $S$-curvature  of homogeneous Finsler spaces with Z. Shen's square metric and Randers change of  square metric. 
	The importance of S-curvature in  Riemann-Finsler geometry can be seen in several  papers (e.g., \cite{ZShen2003, ZShen2005}).
	
	The simplest non-Riemannian metrics are the Randers metrics given by $F=\alpha+ \beta$ with $\lVert \beta\rVert_\alpha <1, $ where $\alpha$ is a Riemannian metric and $\beta$ is a 1-form. Besides Randers metrics, other interesting kind of non-Riemannian metrics are square metrics. Berwald's metric, constructed by Berwald \cite{LBer1929} in 1929 as 
	$$ F=\dfrac{\left( \sqrt{\left( 1-\lvert x\rvert^2\right)\lvert y\rvert^2+\langle x,y \rangle^2 }+\langle x,y \rangle \right)^2 }{\left( 1-\lvert x\rvert^2\right)^2 \sqrt{\left( 1-\lvert x\rvert^2\right)\lvert y\rvert^2+\langle x,y \rangle^2 }} $$ 
	is a classical example of square metric. Berwald's metric can be rewritten as follows:
	
	\begin{equation}{\label{s1}}
	F=\dfrac{\left( \alpha+ \beta\right) ^2}{\alpha},
	\end{equation}
	where $$ \alpha= \dfrac{\sqrt{\left( 1-\lvert x\rvert^2\right)\lvert y\rvert^2+\langle x,y \rangle^2 }}{\left( 1-\lvert x\rvert^2\right)^2}, $$
	and 
	$$\beta=\dfrac{\langle x, y \rangle}{\left( 1-\lvert x\rvert^2\right)^2}. $$
	An $(\alpha, \beta)$-metric expressed in the form (\ref{s1}) is called  square metric \cite{ZSheCYu2014}. Just as Randers metrics, square metrics play an important role in Finsler geometry. The importance of square metric can be seen in papers \cite{ZSheCYu2014, J2QXia2016, CYuHZhu2015}). Square metrics can also be expressed in the form \cite{CYuHZhu2015}
	$$F=\dfrac{\left( \sqrt{\left( 1-b^2\right)\alpha^2+\beta^2 }+\beta \right)^2 }{\left( 1-b^2\right)^2 \sqrt{\left( 1-b^2\right)\alpha^2+\beta^2 }}, $$
	where $ b:= \lVert \beta_x\rVert_\alpha $ is the length of $\beta$. \\
	In this case, $F=\alpha \phi(b^2, \dfrac{\beta}{\alpha}),$ where $\phi = \phi (b^2,s) $ is a smooth function, is called general $(\alpha, \beta)$-metric.
	If $\phi= \phi(s)$ is independent of $b^2,$ then $F$ is called an $(\alpha, \beta)$-metric. One more interesting fact is that if $ \alpha=\lvert y \rvert, $ and 
	$ \beta= \langle x, y \rangle, $ then $F=\lvert y \rvert \phi \left(\lvert x \rvert^2, \dfrac{\langle x, y \rangle}{\lvert y \rvert}\right) $ becomes spherically symmetric metric.\\
	
	\noindent If $F(\alpha, \beta)$ is a Finsler metric, then $ F(\alpha, \beta) \longrightarrow \bar{F}(\alpha, \beta) $ is called a Randers change if 
	\begin{equation}{\label{s2}}
	\bar{F}(\alpha, \beta)=F(\alpha, \beta) + \beta.
	\end{equation}
	Above  change 
	%	(\ref{s2}) 
	of a Finsler metric has been introduced by M. Matsumoto \cite{M.Mat1974}, and it was named as ``Randers change" by M. Hashiguchi and Y. Ichijy$\bar{o}$ \cite{MHasYIch1980}. 
	In the current paper, we deal with	 following two $(\alpha, \beta)$-metrics:
	\begin{itemize}
		\item [(i)]Square metric $F= \dfrac{(\alpha+\beta)^2}{\alpha}= \alpha \phi(s), \text{where} \ \ \phi(s)=1+s^2+2s.$
		\item [(ii)] Randers changed square metric $F=\dfrac{(\alpha+\beta)^2}{\alpha}+\beta
		%		=\alpha\left(  \left(1+s \right)^2+s\right)
		= \alpha \phi(s), \text{where} \ \ \phi(s)=1+s^2+3s.$ 
	\end{itemize} 
	%It is to be remarked that both the  metrics written above satisfy Lemma 
	
	\noindent The paper is organized as follows:\\
	In section 2, we discuss some basic definitions and results to be used in consequent sections. The existence of invariant vector fields on homogeneous Finsler spaces with afore said metrics is proved in section 3,. Further, in section 4, we establish  an explicit formula for $S$-curvature of afore said $(\alpha, \beta)$-metrics. Finally, in section 5, the mean Berwald curvature of these metrics has been calculated.

	\section{Preliminaries}
	First, we discuss some basic definitions and results required to study afore said spaces.  We refer \cite{BCS, ChernShenRFG} and \cite{SDeng2012} for notations and further details.
	\begin{definition}
		An n-dimensional real vector space $V$ is said to be  a \textbf{Minkowski space}
		if there exists a real valued function $F:V \longrightarrow [0,\infty)$, called Minkowski norm, satisfying the following conditions: 
		\begin{itemize}
			\item  $F$ is smooth on $V \backslash \{0\},$ 
			%		\item[(b)] $F(v) \geq 0  \ \ \forall \ v \in V,$
			\item $F$ is positively homogeneous, i.e., $ F(\lambda v)= \lambda F(v), \ \ \forall \ \lambda > 0, $
			\item For any basis $\{u_1,\ u_2, \,..., \ u_n\}$ of $V$ and $y= y^iu_i \in V$, the Hessian matrix \\
			$\left( g_{_{ij}}\right)= \left( \dfrac{1}{2} F^2_{y^i y^j} \right)  $ is positive-definite at every point of $V \backslash \{0\}.$
		\end{itemize}
		%	Here,	$F$ is called a Minkowski norm.
	\end{definition}
	
	%A Finsler manifold is a manifold which is smooth varying family of Minkowski norms, defined as follows: 
	
	\begin{definition}	
		Let $M$ be a  connected smooth manifold. If there exists a function $F: TM \longrightarrow [0, \infty)$ such that $F$ is smooth on the slit tangent bundle $TM \backslash \{0\}$ and the restriction of $F$ to any $T_xM, \ x \in M$, is a Minkowski norm, then $M$ is called a Finsler space  and  $F$ is called a Finsler metric. 
	\end{definition}
	
	%	The notion of $(\alpha, \beta)$-metrics was introduced by Matsumoto in \cite{M.Mat1972}.
	An $(\alpha, \beta)$-metric on a connected smooth manifold $M$ is a Finsler metric $F$ constructed from a Riemannian metric $ \alpha = \sqrt{a_{ij}(x) y^i y^j} $ and a one-form $ \beta = b_i(x) y^i $ on $M$ and is of the form  $ F= \alpha \phi \left( \dfrac{\beta}{\alpha}\right),$ where $ \phi  $ is a smooth function on $M$. Basically,  $(\alpha, \beta)$-metrics are the generalization of Randers metrics.   Many authors \cite{SDenXWan2010, ZHuSDen2012, GK2019_1, GK2019,GSK2019, MXuSDen2015} have worked on $(\alpha, \beta)$-metrics. Let us recall Shen's lemma \cite{ChernShenRFG} which provides necessary and sufficient condition for an $(\alpha, \beta)$-metric to be a Finsler metric.

	\begin{lemma} {\label{Shenlemma}}
		Let $F=\alpha \phi(s), \ s=\beta/ \alpha,$ where $\phi $ is a smooth function on an open interval  $ (-b_0, b_0), \ \alpha$ is a Riemannian metric and $\beta$ is a 1-form with  $\lVert \beta \rVert_{\alpha} < b_0$. Then $F$ is a Finsler metric if and only if the following conditions are satisfied:
		$$ \phi(s) > 0, \ \  \phi(s)-s\phi'(s)+\left( b^2-s^2\right) \phi''(s)>0, \ \  \forall \ \ \lvert s\rvert \leq b < b_0.$$
	\end{lemma}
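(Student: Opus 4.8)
Since $\|\beta\|_\alpha<b_0$ forces $s=\beta/\alpha$ to stay in $(-b_0,b_0)$ where $\phi$ is smooth, and positive homogeneity of $F=\alpha\phi(s)$ is a one-line check, the only content is that the fundamental tensor $g_{ij}=\tfrac12(F^2)_{y^iy^j}$ be positive-definite on $TM\setminus\{0\}$. The plan is: (1) compute $g_{ij}$ explicitly; (2) diagonalise it in an adapted basis and read off $\det(g_{ij})$; (3) translate positive-definiteness into the two stated inequalities, in both directions.

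For step (1), writing $y_i=a_{ij}y^j$ one has $\alpha_{y^i}=y_i/\alpha$, $\beta_{y^i}=b_i$ and $s_{y^i}=\tfrac1\alpha\!\left(b_i-s\,y_i/\alpha\right)$, hence $F_{y^i}=(\phi-s\phi')\,y_i/\alpha+\phi'b_i$; differentiating once more and using $g_{ij}=FF_{y^iy^j}+F_{y^i}F_{y^j}$ gives, after routine sign bookkeeping, the standard form
$$g_{ij}=\rho\,a_{ij}+\rho_0\,b_ib_j+\rho_1\,\frac{b_iy_j+b_jy_i}{\alpha}-s\rho_1\,\frac{y_iy_j}{\alpha^2},\qquad \rho=\phi(\phi-s\phi'),\ \ \rho_0=\phi\phi''+(\phi')^2,\ \ \rho_1=\phi\phi'-s\rho_0 .$$
I expect step (2) to be the main obstacle, but it collapses once one uses the identity $\rho_0 s+\rho_1=\phi\phi'$. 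Fix $x,y$ and take an $a$-orthonormal basis $e_1,\dots,e_n$ with $e_1=y/\alpha$ and $e_2$ a unit multiple of the $a$-orthogonal projection of $b^\sharp$ onto $e_1^{\perp}$; then the $e_1,e_2$-components of $b^\sharp$ are $s$ and $\sqrt{b^2-s^2}$ and all higher ones vanish. In this basis $(g_{ij})=\rho\,I_{n-2}\oplus M$ with $M$ a $2\times2$ block, and a short computation gives the clean identities $M_{11}=\phi^2$ and $\det M=\phi^3\big[\phi-s\phi'+(b^2-s^2)\phi''\big]$ (the only genuine cancellation needed is $(\phi')^2(\phi-s\phi'-\phi)+s(\phi')^3=0$), whence
$$\det(g_{ij})=\phi^{\,n+1}\,(\phi-s\phi')^{\,n-2}\,\big[\phi-s\phi'+(b^2-s^2)\phi''\big]\,\det(a_{ij}).$$

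For step (3), suppose first that $F$ is a Finsler metric, so $(g_{ij})$ is positive-definite for every admissible $\alpha,\beta$ and every $y\neq0$. Euler's theorem gives $g_{ij}y^iy^j=F^2=\alpha^2\phi(s)^2>0$, so $\phi>0$ on all of $(-b_0,b_0)$ (any value of $s$ is realised by suitable model data on $\R^n$). Positivity of the leading principal minors of $\rho\,I_{n-2}\oplus M$ forces $\rho=\phi(\phi-s\phi')>0$ and $\det M=\phi^3[\phi-s\phi'+(b^2-s^2)\phi'']>0$; dividing by $\phi^3>0$ yields the required inequality. Conversely, assume $\phi>0$ and $\phi-s\phi'+(b^2-s^2)\phi''>0$ for all $|s|\le b<b_0$. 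Applying the inequality with $b$ replaced by $|s|$ kills the $(b^2-s^2)$-term and gives $\phi-s\phi'>0$ on the same range; hence $\rho=\phi(\phi-s\phi')>0$, $M_{11}=\phi^2>0$ and $\det M=\phi^3[\phi-s\phi'+(b^2-s^2)\phi'']>0$, so $M$ is positive-definite by Sylvester's criterion and therefore so is $(g_{ij})=\rho\,I_{n-2}\oplus M$. Thus $F$ is a Finsler metric, completing the equivalence.
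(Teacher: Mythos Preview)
The paper does not actually prove this lemma: it is quoted verbatim as ``Shen's lemma'' with a citation to Chern--Shen, \emph{Riemann--Finsler Geometry}, and no argument is given. So there is nothing in the paper to compare your proof against.

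That said, your proof is correct and is essentially the standard one found in the cited reference. The computation of $g_{ij}$ is accurate (your $\rho_1$ is precisely $\rho'$, which is why the cross terms organise so cleanly), the block-diagonalisation in the adapted frame $e_1=y/\alpha$, $e_2\propto (b^\sharp)^{\perp}$ is the usual trick, and the identities $M_{11}=\phi^2$, $\det M=\phi^3\bigl[\phi-s\phi'+(b^2-s^2)\phi''\bigr]$ and $\det(g_{ij})=\phi^{\,n+1}(\phi-s\phi')^{\,n-2}\bigl[\phi-s\phi'+(b^2-s^2)\phi''\bigr]\det(a_{ij})$ are exactly those in Chern--Shen. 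Your observation in the converse direction---that specialising $b=|s|$ in the second inequality yields $\phi-s\phi'>0$, which is what makes $\rho>0$---is also the standard step. One small remark: in the forward direction you invoke ``model data on $\R^n$'' to realise every pair $(s,b)$ with $|s|\le b<b_0$; this is the right move and reflects the intended reading of the lemma (the conditions are on $\phi$ alone and must hold for \emph{any} admissible $\alpha,\beta$), but you might make that quantifier explicit since the paper's phrasing leaves it slightly ambiguous.
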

	
	Before defining  homogeneous Finsler spaces, below we discuss some basic concepts required. 
	
	\begin{definition}
		Let $G$ be a smooth manifold having the structure of an abstract group. $G$ is called a Lie group, if the maps $i: G \longrightarrow G$ and $ \mu: G \times G \longrightarrow G  $ defined as $i(g)=g^{-1}, $ and $ \mu(g,h)=gh $ respectively, are smooth. 
	\end{definition}
	%	Now, we discuss the concept of action of a Lie group on a manifold.
	Let $G$ be a Lie group and $M$, a smooth manifold. Then a smooth  map $ f: G \times M \longrightarrow M  $ satisfying
	$$ f(g_2, f(g_1,x)) =f(g_2g_1, x), \ \ \text{for all }\ \ g_1, g_2 \in G, \ \ \text{and }\ \ x \in M$$
	is called a smooth action of $G $ on $M.$

	\begin{definition}
		Let $ M $ be a smooth manifold and  $ G, $ a Lie group. If $ G $ acts smoothly  on $ M $, then $ G $ is called a \textbf{Lie transformation group} of $ M $.
	\end{definition}
	%We are interested in the action of Lie group on the coset spaces.
	The following theorem  gives us a differentiable structure on the coset space of a Lie group.
	\begin{theorem}{\label{DiffStrct.}}
		Let $G$ be a Lie group and $H$, its  closed subgroup. Then there exists a unique differentiable structure on the left coset space $G/H$ with the induced topology that turns $G/H$ into a smooth manifold such that $G$ is a Lie transformation group of $G/H$.
	\end{theorem}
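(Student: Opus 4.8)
The plan is to build the smooth structure on $G/H$ from local \emph{slices} of the exponential map, spread a single chart near the identity coset over all of $G/H$ by left translations, and then verify smoothness of the transition maps, of the projection, and of the action; uniqueness will follow from the universal property of surjective submersions. Throughout, the standing hypothesis that $H$ is \emph{closed} is what makes the construction work.

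First I would set up the infinitesimal picture. Since $H$ is a closed subgroup of the Lie group $G$, Cartan's closed subgroup theorem gives that $H$ is an embedded Lie subgroup; write $\mathfrak{g}$ and $\mathfrak{h}$ for the Lie algebras of $G$ and $H$, so that $\mathfrak{h}$ is a subalgebra of $\mathfrak{g}$ and, near $e$, $H$ coincides with $\exp$ of a neighbourhood of $0$ in $\mathfrak{h}$. Fix a vector-space complement $\mathfrak{m}$ with $\mathfrak{g}=\mathfrak{h}\oplus\mathfrak{m}$ and consider $\Phi\colon\mathfrak{m}\times H\to G$, $\Phi(X,h)=\exp(X)\,h$. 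Its differential at $(0,e)$ is the identity of $\mathfrak{g}$ under the splitting $\mathfrak{g}=\mathfrak{m}\oplus\mathfrak{h}$, so $\Phi$ is a local diffeomorphism there; using closedness of $H$ one then obtains an open neighbourhood $V$ of $0$ in $\mathfrak{m}$ such that $\exp|_V$ is a diffeomorphism onto $S:=\exp(V)$, the set $S$ meets each coset $gH$ in at most one point, and $SH$ is open in $G$. Because the quotient map $\pi\colon G\to G/H$ is open for the induced topology, $U:=\pi(S)$ is an open neighbourhood of $o:=eH$, and $\psi:=(\exp|_V)^{-1}\circ(\pi|_S)^{-1}\colon U\to V$ is a homeomorphism onto the open set $V\subseteq\mathfrak{m}\cong\mathbb{R}^{\dim G-\dim H}$.

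Next I would translate this chart around. For $g\in G$ put $U_g:=g\cdot U$, which is open since $x\mapsto g\cdot x$ is a homeomorphism of $G/H$, and define $\psi_g\colon U_g\to V$ by $\psi_g\big(g\,\pi(s)\big)=\psi(\pi(s))$ for $s\in S$; the $U_g$ cover $G/H$. On an overlap $U_g\cap U_{g'}$ the transition $\psi_{g'}\circ\psi_g^{-1}$ is, after identifying $V$ with $S$ via $\exp|_V$, the assignment $X\mapsto(\mathfrak{m}\text{-component of }\Phi^{-1}((g')^{-1}g\exp(X)))$, which is smooth because multiplication in $G$ and the local inverse $\Phi^{-1}$ are smooth. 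Hence $\{(U_g,\psi_g)\}$ is a smooth atlas; in these charts $\pi$ reads as $(X,h)\mapsto X$, so $\pi$ is a smooth surjective submersion admitting local sections. For the $G$-action, let $\theta\colon G\times G/H\to G/H$ be the natural map and $\mu\colon G\times G\to G$ multiplication; then $\theta\circ(\mathrm{id}_G\times\pi)=\pi\circ\mu$ is smooth, and since $\mathrm{id}_G\times\pi$ is a surjective submersion, $\theta$ is smooth, so $G$ acts smoothly on $G/H$. Uniqueness is then formal: if $(G/H)'$ denotes the coset space equipped with any other smooth structure with the stated property, $\pi$ is still smooth as a map into $(G/H)'$, so by the universal property of the surjective submersion $\pi\colon G\to G/H$ the identity $G/H\to(G/H)'$ is smooth, and symmetrically its inverse is smooth, whence the two structures agree.

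The technical heart — and the step I expect to be the main obstacle — is the construction of the slice $S$ in the previous-to-last paragraph: producing $V$ so that $\exp|_V$ is a diffeomorphism onto its image, $S$ hits each coset at most once, and $SH$ is open, together with the Hausdorffness of $G/H$. This is precisely where closedness of $H$ is indispensable (for a non-closed subgroup, such as an irrational line in a torus, $G/H$ need not even be Hausdorff), and it amounts to unpacking Cartan's closed subgroup theorem; once this local normal form is available, the remaining steps are routine bookkeeping with the smoothness of group multiplication.
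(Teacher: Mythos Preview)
Your proposal is a correct, standard textbook construction of the smooth structure on $G/H$ via slices of the exponential map (as in Helgason, Warner, or Lee), and the uniqueness argument via the universal property of surjective submersions is the expected one. However, there is nothing to compare it against: in the paper this theorem is stated in the Preliminaries section as a well-known background result and is given \emph{no proof at all}. So your write-up is not ``the same approach'' or ``a different approach'' --- it simply supplies a proof where the paper deliberately omits one and cites the result as classical.
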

	\begin{definition}
		Let $ (M, F)$  be a  connected Finsler space and  $ I(M, F)$ the group of isometries of $(M,F)$. If the action of $I(M,F)$ is transitive on $M$, then $(M,F)$  is said to be a \textbf{homogeneous Finsler space}. 
	\end{definition}
	
	%	Let us recall \cite{SDeng2012} the following results, which  are necessary to define reductive decomposition of a group of isometries of a Finsler manifold $M$. \\
	
	% page 67 in SDeng book
	Let $G$ be a Lie group acting transitively on  a smooth manifold $M$. Then for $a \in M$, the isotropy subgroup $G_a$ of $G$ is a closed subgroup and by theorem  \ref{DiffStrct.}, $G$ is a Lie transformation group of $ G/G_a.$ Further,  $ G/G_a $ is diffeomorphic to $M$.
	
	\begin{theorem}\cite{SDeng2012}
		Let $(M,F)$ be a Finsler space. Then $G=I(M,F)$, the group of isometries  of $M$ is a Lie transformation group of $M$. Let $ a \in M$ and $ I_a(M,F)$ be the isotropy subgroup of $I(M,F)$ at $a$. Then $ I_a(M,F)$ is compact.
	\end{theorem}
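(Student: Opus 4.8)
The plan is to combine the Finslerian analogue of the Myers--Steenrod theorem with the elementary fact that the linear isometry group of a Minkowski norm is compact. For the first assertion I would invoke the theorem of S.~Deng and Z.~Hou (contained in \cite{SDeng2012}): equipped with the compact--open topology, $G=I(M,F)$ is a Lie group, and the natural map $G\times M\to M$ is a smooth action, so $G$ is a Lie transformation group of $M$. In particular, for $a\in M$ the orbit map $g\mapsto g\cdot a$ is continuous, hence $I_a(M,F)=\{g\in G:g\cdot a=a\}$ is the preimage of the closed set $\{a\}$ and is therefore a closed subgroup of $G$.

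To prove that $I_a:=I_a(M,F)$ is compact I would study the isotropy representation $\iota\colon I_a\to GL(T_aM)$, $\iota(g)=dg_a$, a smooth homomorphism whose image lies in the linear isometry group $\mathrm{O}(F_a):=\{A\in GL(T_aM):F_a(Av)=F_a(v)\ \text{for all }v\}$ of the Minkowski norm $F_a:=F|_{T_aM}$. First, $\iota$ is injective: since an isometry sends geodesics to geodesics, $dg_a=\mathrm{id}$ forces $g\circ\exp_a=\exp_{g(a)}\circ\,dg_a=\exp_a$ on a normal neighbourhood of $a$, so $g=\mathrm{id}$ near $a$; the set of points where $g$ and $dg$ agree with the identity is then open (same argument at each of its points), closed (intersection of closed conditions) and nonempty, hence all of $M$ by connectedness, and $g=\mathrm{id}$. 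Second, $\mathrm{O}(F_a)$ is a compact subgroup of $GL(T_aM)$: it is cut out by the continuous equations $F_a(Av)=F_a(v)$, and every such $A$ preserves the compact convex body $\{v:F_a(v)\le1\}$, which contains a Euclidean ball about $0$; comparing radii yields uniform bounds on $\|A\|$ and $\|A^{-1}\|$, so $\mathrm{O}(F_a)$ is a bounded, closed subset of $\mathrm{End}(T_aM)$.

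Putting these together, $\iota$ realizes $I_a$ as an injective continuous image inside the compact group $\mathrm{O}(F_a)$. The final step is to upgrade this to the statement that $\iota$ is a homeomorphism onto a closed subgroup of $\mathrm{O}(F_a)$ --- equivalently, that the $G$-action on $M$ is proper --- so that $I_a$ becomes homeomorphic to a closed subset of a compact space, hence compact.

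I expect this last step to be the main obstacle. An injective continuous homomorphism into a compact group need not have closed image (an irrational line in a torus is the standard warning), so the conclusion genuinely uses that an isometry of $(M,F)$ is determined globally by its $1$-jet at a single point through the exponential map, together with the hard part of the Deng--Hou theorem identifying the compact--open topology on $I(M,F)$ with a Lie group topology for which the action is proper. This is the piece I would quote verbatim from \cite{SDeng2012} rather than reprove, the rest of the argument being the routine verifications sketched above.
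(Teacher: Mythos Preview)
The paper does not prove this theorem at all: it appears in the Preliminaries section as a cited result, attributed to \cite{SDeng2012}, with no accompanying argument. So there is no ``paper's own proof'' to compare your proposal against; the authors simply quote Deng's monograph.

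That said, your sketch is the standard route and is essentially sound. You correctly isolate the three ingredients: the Deng--Hou Finslerian Myers--Steenrod theorem giving $I(M,F)$ its Lie group structure and smooth action; injectivity of the isotropy representation $\iota\colon I_a\to \mathrm{O}(F_a)$ via determination of an isometry by its $1$-jet and the exponential map; and compactness of the linear isometry group $\mathrm{O}(F_a)$ of a Minkowski norm. You are also right to flag the only real subtlety, namely that an injective continuous homomorphism into a compact group need not have closed image, so one genuinely needs properness of the $G$-action (equivalently, that $\iota$ is a closed embedding). That properness is exactly part of the content of the Deng--Hou theorem you are already citing, so invoking it there is legitimate and not circular. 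Your plan to quote that piece verbatim from \cite{SDeng2012} and verify the rest by hand is appropriate; nothing further is needed.
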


	%Let $ (M,F)$ be a  homogeneous Finsler space.  Then, $M$ can be written as a coset space $G/H$, where $G= I(M,F)$ is a Lie transformation group of $M$ and $H$,  the compact isotropy subgroup of $ I(M,F)$ at some point $x \in M $. Let $ \mathfrak g$ and $ \mathfrak h $ be the Lie algebras of the Lie groups $G$ and $H$ respectively. Also, let $\mathfrak k$ be a subspace of $\mathfrak g$   such that $\text{Ad}(h) \mathfrak k \subset \mathfrak k \ \ \forall \  h \in H,$ then  $ \mathfrak g = \mathfrak h + \mathfrak k $ is called a reductive decomposition of $\mathfrak g$.\\ 

	Let $ (M,F)$ be a  homogeneous Finsler space, i.e., $G= I(M,F)$ acts transitively on $M$. For $a \in M$, let $ H=I_a(M,F) $ be a closed isotropy subgroup of $G$ which is compact. Then $H$ is a Lie group itself being a closed subgroup of $G$. Write $M$ as the quotient space $G/H$.
	\begin{definition}\cite{Nomizu}{\label{defNomizu}}
		Let $ \mathfrak g$ and $ \mathfrak h $ be the Lie algebras of the Lie groups $G$ and $H$ respectively. Then the direct sum decomposition of $\mathfrak{g} $ as  $ \mathfrak g = \mathfrak h + \mathfrak k,$ where $\mathfrak k$ is a subspace of $\mathfrak g$   such that $\text{Ad}(h) (\mathfrak k) \subset \mathfrak k \ \ \forall \  h \in H,$ is called a reductive decomposition of $\mathfrak g$, and if such decomposition exists, then $ (G/H, F) $ is called reductive homogeneous space.
	\end{definition}
	Therefore, we can write, any homogeneous Finsler space as a coset space of a connected Lie group with an invariant Finsler metric. Here, the Finsler metric $F$ is viewed as $G$ invariant Finsler metric on $M$.\\
	
	%	A one-parameter subgroup of a Lie group $G$ is a homomorphism $\psi : \mathbb{R} \longrightarrow G$.
	\begin{definition}
		A one-parameter subgroup of a Lie group $G$ is a homomorphism $\psi : \mathbb{R} \longrightarrow G,$ such that $\psi(0)=e$, where $e$ is the identity of $G$.
		%	A one-parameter subgroup of a Lie group $G$ is a homomorphism $\psi : \mathbb{R} \longrightarrow G$.
		%	such that $\phi\left(t_1+t_2 \right)= \phi\left(t_1 \right)\phi\left(t_2 \right) \ \ \forall \ t_1,\  t_2 \in \mathbb{R}  $ and $\phi(0)=e$, where $e$ is the identity of $G$.
	\end{definition}
	Recall \cite{SDeng2012} the following result which gives us the  existence of one-parameter subgroup of a Lie group.
	\begin{theorem}
		Let $G$ be a Lie group having Lie algebra $\mathfrak g$. Then for any $Y \in \mathfrak g$,  there exists a unique one-parameter subgroup $\psi$ such that $\dot{\psi} (0)=Y_e$, where $e$ is the identity element of $G$.
	\end{theorem}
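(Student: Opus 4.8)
The plan is to realize $\psi$ as the integral curve through the identity of the left-invariant vector field determined by $Y$. Identifying $\mathfrak{g}$ with $T_eG$, I would first extend $Y_e\in T_eG$ to a left-invariant vector field $\widetilde{Y}$ on $G$ by setting $\widetilde{Y}_g=(dL_g)_e(Y_e)$, where $L_g$ denotes left translation by $g$. Since each $L_g$ is a diffeomorphism, $\widetilde{Y}$ is smooth, so the fundamental existence-and-uniqueness theorem for ordinary differential equations yields a unique maximal integral curve $\psi\colon(-\varepsilon,\varepsilon)\to G$ with $\psi(0)=e$ and $\dot\psi(t)=\widetilde{Y}_{\psi(t)}$ for all $t$ in its domain.

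The key step is to promote this local solution to a homomorphism defined on all of $\mathbb{R}$. Fixing $s\in(-\varepsilon,\varepsilon)$, I would compare the curves $t\mapsto\psi(s+t)$ and $t\mapsto\psi(s)\psi(t)=L_{\psi(s)}(\psi(t))$: both pass through $\psi(s)$ at $t=0$, and because $\widetilde{Y}$ is left-invariant both are integral curves of $\widetilde{Y}$, so uniqueness forces $\psi(s+t)=\psi(s)\psi(t)$ wherever both sides are defined. This cocycle identity lets me extend $\psi$ consistently beyond $(-\varepsilon,\varepsilon)$: for $|t|<2\varepsilon$ write $t=s+u$ with $|s|,|u|<\varepsilon$ and set $\psi(t):=\psi(s)\psi(u)$, which is well defined by the identity just proved and is again an integral curve of $\widetilde{Y}$; iterating this doubling produces a smooth $\psi\colon\mathbb{R}\to G$ satisfying $\psi(s+t)=\psi(s)\psi(t)$ for all $s,t\in\mathbb{R}$ and $\psi(0)=e$. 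Hence $\psi$ is a one-parameter subgroup with $\dot\psi(0)=\widetilde{Y}_e=Y_e$.

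For uniqueness, suppose $\phi\colon\mathbb{R}\to G$ is any one-parameter subgroup with $\dot\phi(0)=Y_e$. Differentiating $\phi(s+t)=\phi(s)\phi(t)$ with respect to $t$ at $t=0$ gives $\dot\phi(s)=(dL_{\phi(s)})_e\big(\dot\phi(0)\big)=\widetilde{Y}_{\phi(s)}$, so $\phi$ is an integral curve of $\widetilde{Y}$ through $e$; the uniqueness clause of the ODE theorem then forces $\phi=\psi$. The only genuine subtlety here is the completeness of $\widetilde{Y}$ — that the maximal integral curve extends to all of $\mathbb{R}$ rather than a bounded interval — and this is exactly where the group structure, via the identity $\psi(s+t)=\psi(s)\psi(t)$, is indispensable, since local ODE theory alone does not provide it.
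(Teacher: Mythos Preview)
Your argument is correct and is the standard proof of this classical fact: extend $Y_e$ to a left-invariant vector field, take its integral curve through $e$, use left-invariance to establish the homomorphism identity $\psi(s+t)=\psi(s)\psi(t)$ locally and thereby extend to all of $\mathbb{R}$, and derive uniqueness from ODE uniqueness. There are no gaps.

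Note, however, that the paper does not supply its own proof of this theorem. It is stated in the preliminaries section as a known result, with a citation to Deng's monograph \emph{Homogeneous Finsler Spaces}. So there is no ``paper's proof'' to compare against; you have written out the standard textbook argument for a result the authors simply quote.
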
 
	\begin{definition}
		Let $G$ be a Lie group with identity element $e$ and $\mathfrak g$  its Lie algebra.
		The exponential map $\exp: \mathfrak g \longrightarrow G$ is defined by $$\exp(tY)= \psi(t), \ \ \forall \ t \in \mathbb R, $$
		where $\psi : \mathbb{R} \longrightarrow G $ is unique one-parameter subgroup of $G$ with  $\dot{\psi} (0)=Y_e$.
	\end{definition}
	In case of reductive homogeneous manifold, we can identify the tangent space $ T_H(G/H) $ of $G/H$ at the origin $eH=H$ with $\mathfrak k$ through the map
	$$ Y \longmapsto \dfrac{d}{dt}exp(tX)H \arrowvert_{t=0}, \ \ Y \in \mathfrak{k},$$
	since $M$ is identified with $G/H$ and  Lie algebra of any Lie group $G$ is viewed as $T_eG.$

	\section{Invariant Vector Field}
	For a homogeneous Finsler space with square metric $ F=\dfrac{(\alpha + \beta)^2}{\alpha},  $ in theorem \ref{thm3.1},  we prove  the existence of invariant vector field corresponding to 1-form $\beta.$
	%	 for a homogeneous Finsler space with square-metric $F=\dfrac{(\alpha+\beta)^2}{\alpha}$. 
	For this, first we prove following lemmas:
	
	\begin{lemma}{\label{a1}}
		Let $ (M,\alpha)$ be a Riemannian space and $\beta=b_i y^i$, a   1-form with $\lVert \beta \rVert = \sqrt{b_i b^i} < 1$. Then the square Finsler metric $F=\dfrac{(\alpha+\beta)^2}{\alpha}$,  consists of a Riemannian metric $\alpha$ along with a smooth vector field $X$ on $M$ with $\alpha\left( X\rvert_x\right) < 1$, $ \forall \  x \in M$, 
		i.e., 
		$$ F\left( x, y\right)=  \dfrac{(\alpha\left( x, y\right)+\left\langle  X\rvert_x, y\right\rangle)^2}{\alpha\left( x, y\right) }, \ \ x \in M, \ \ y \in T_x M,$$ 
		where $\left\langle \ , \ \right\rangle $ is the inner product induced by the Riemannian metric $\alpha.$
	\end{lemma}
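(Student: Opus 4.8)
The plan is to exploit the standard correspondence between 1-forms and vector fields furnished by the Riemannian metric $\alpha$, and then to check that the "admissibility" bound $\lVert \beta \rVert < 1$ translates into exactly the pointwise bound $\alpha(X|_x) < 1$ required for $F$ to be a genuine Finsler metric. Concretely, since $\alpha = \sqrt{a_{ij}(x) y^i y^j}$ is Riemannian, the matrix $\left(a_{ij}(x)\right)$ is positive-definite at each $x \in M$, hence invertible; write $\left(a^{ij}(x)\right)$ for its inverse. Given the 1-form $\beta = b_i(x) y^i$, define the vector field $X$ by $X|_x = a^{ij}(x) b_j(x)\,\partial/\partial x^i$, i.e. raise the index: $X^i = a^{ij} b_j$. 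Smoothness of $X$ is immediate since $a^{ij}$ and $b_j$ are smooth and matrix inversion is a smooth operation on the (open) set of positive-definite matrices. By construction, for any $y \in T_x M$ one has $\left\langle X|_x, y \right\rangle = a_{ij}(x) X^i y^j = a_{ij}(x) a^{ik}(x) b_k(x) y^j = b_j(x) y^j = \beta(x,y)$, which gives the displayed formula for $F$ once we substitute $\beta = \left\langle X|_x, y\right\rangle$ into $F = (\alpha+\beta)^2/\alpha$.

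Next I would verify the norm condition. We have $\alpha\!\left(X|_x\right)^2 = a_{ij}(x) X^i X^j = a_{ij}(x) a^{ik}(x) b_k(x) a^{jl}(x) b_l(x) = a^{kl}(x) b_k(x) b_l(x) = b^k b_k = \lVert \beta \rVert^2 < 1$, so $\alpha(X|_x) < 1$ for all $x \in M$, as claimed. Finally, one should note that $F$ is indeed a Finsler metric under this hypothesis: for the square metric $\phi(s) = 1 + 2s + s^2 = (1+s)^2$, Shen's Lemma \ref{Shenlemma} with $b = \lVert\beta\rVert < 1$ reduces to checking $\phi(s) > 0$ and $\phi - s\phi' + (b^2 - s^2)\phi'' > 0$ for $|s| \le b < 1$; a short computation gives $\phi - s\phi' + (b^2-s^2)\phi'' = (1+s)^2 - 2s(1+s) + 2(b^2-s^2) = (1-s^2) + 2(b^2 - s^2) $, wait — more carefully $\phi - s\phi' = (1+s)^2 - s\cdot 2(1+s) = (1+s)(1+s-2s) = (1+s)(1-s) = 1 - s^2$, and adding $(b^2-s^2)\cdot 2$ yields $1 - s^2 + 2b^2 - 2s^2$; since $|s| \le b < 1$ this is positive, confirming $F$ is Finsler.

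The construction itself is routine linear algebra fiberwise; the only genuine point requiring care is the global regularity of $X$, i.e. that raising an index with $a^{ij}(x)$ preserves smoothness in $x$ — which holds because on a Riemannian manifold the inverse metric components $a^{ij}$ are smooth functions of the local coordinates. I do not anticipate a serious obstacle here: the lemma is essentially the observation that $(\alpha,\beta)$ data and $(\alpha, X)$ data are interchangeable via the metric duality, packaged together with the elementary inequality $\lVert\beta\rVert = \alpha(X)$. The one modeling subtlety worth stating explicitly is that the correspondence is coordinate-independent — $X$ is a well-defined global vector field, not merely a collection of local expressions — which follows from the tensorial transformation law of $a^{ij}$ and $b_i$ under changes of chart.
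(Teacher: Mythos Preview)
Your proposal is correct and follows essentially the same approach as the paper: define $X$ by raising the index of $b_i$ via the inverse metric $a^{ij}$, verify $\langle X|_x, y\rangle = \beta(y)$ by direct computation, and check $\alpha(X|_x)^2 = b_i b^i = \lVert\beta\rVert^2 < 1$. Your additional remarks on smoothness, coordinate-independence, and the Shen's Lemma verification go beyond what the paper includes but are not required for the lemma as stated; you may drop the informal aside (``wait --- more carefully'') in a final write-up.
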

	\begin{proof}
		We know that  the restriction of a Riemannian metric to a tangent space is an inner product. Therefore, the bilinear form $ \left\langle u, v \right\rangle = a_{ij}u^i v^j, \ \ u, v \ \in  T_x M  $ is an inner product on $T_x M$ for $ x \in M, $ and this inner product induces an inner product on $T^{*}_{x}M,$ the cotangent space of $M$ at $x$ which gives us $  \left\langle dx^i, dx^j \right\rangle = a^{ij}.$ A linear isomorphism exists between $T^{*}_{x}M$ and $T_{x}M,$ which can be defined by using this inner product. It follows that the   1-form $\beta$ corresponds to a smooth vector field $X$ on $M$, which can be written as  $$ X\rvert_x = b^i \dfrac{\partial}{\partial x^i}, \text{\ \ where} \ \   b^i= a^{ij} b_j.$$
		Then, for $ y \in T_xM, $ we have
		$$ \left\langle  X\rvert_x, y\right\rangle = \left\langle b^i \dfrac{\partial}{\partial x^i} \ , \  y^j \dfrac{\partial}{\partial x^j} \right\rangle = b^i y^j a_{ij} = b_j y^j = \beta(y).$$
		Also, we have  
		$$	\alpha^2 (x,y)=a_{ij}y^i y^j,$$
		\text{which implies\ \ }
		$$\alpha^2\left( X\rvert_x\right) =a_{ij}b^i b^j  = \lVert \beta \rVert^2<1,$$
		i. e., 
		$$ \alpha\left( X\rvert_x\right)   <1. $$ This completes the proof.
	\end{proof}
	
	\begin{lemma}{\label{a2}}
		Let $ (M,F)$ be a Finsler space with square metric  $F=\dfrac{(\alpha+\beta)^2}{\alpha}.$ Let  $ I(M, F)$  be the group of isometries  of $ (M,F)$ and $ I(M, \alpha)$ be that of  Riemannian  space $ (M,\alpha).$ Then   $ I(M, F)$   is a closed subgroup of  $ I(M, \alpha).$  
	\end{lemma}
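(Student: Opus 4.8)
The plan is to recover the Riemannian data $(\alpha,\beta)$ algebraically from $F$ by exploiting parity in $y$, and then to identify $I(M,F)$ as the stabiliser of $\beta$ inside $I(M,\alpha)$. First I would expand
$$F(x,y)=\frac{(\alpha+\beta)^2}{\alpha}=\alpha(x,y)+2\beta(x,y)+\frac{\beta(x,y)^2}{\alpha(x,y)}.$$
Since $\alpha(x,\cdot)$ is even and $\beta(x,\cdot)$ is odd on $T_xM$, replacing $y$ by $-y$ and taking the difference and the sum gives
$$\beta(x,y)=\tfrac14\bigl(F(x,y)-F(x,-y)\bigr),\qquad \tfrac12\bigl(F(x,y)+F(x,-y)\bigr)=\alpha(x,y)+\frac{\beta(x,y)^2}{\alpha(x,y)}.$$
The second identity is a quadratic equation in $\alpha(x,y)$ whose two roots are $\alpha(x,y)$ and $\beta(x,y)^2/\alpha(x,y)$, with product $\beta(x,y)^2$. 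Because $\lVert\beta\rVert_\alpha<1$ (equivalently, by Lemma \ref{a1}, $\alpha(X\rvert_x)<1$), the Cauchy--Schwarz inequality yields $|\beta(x,y)|\le \lVert\beta\rVert_\alpha\,\alpha(x,y)<\alpha(x,y)$ for every $y\neq 0$, so $\beta(x,y)^2/\alpha(x,y)<\alpha(x,y)$ and $\alpha(x,y)$ is the larger of the two roots. Hence both $\alpha$ and $\beta$ are uniquely determined by $F$ pointwise on $TM\setminus\{0\}$ by an inversion that is intrinsic to $F$.

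Next, let $\phi\in I(M,F)$. For every $x$ and every $y\in T_xM$ we have $F(\phi(x),d\phi_x(y))=F(x,y)$, and applying the reconstruction formulas above — which are universal and therefore transported by $\phi$ — gives $\beta_{\phi(x)}(d\phi_x y)=\beta_x(y)$ and $\alpha(\phi(x),d\phi_x y)=\alpha(x,y)$. Thus $\phi^*\alpha=\alpha$ and $\phi^*\beta=\beta$, so $\phi\in I(M,\alpha)$. Conversely, any $\psi\in I(M,\alpha)$ with $\psi^*\beta=\beta$ satisfies $\psi^*F=\psi^*\bigl(\alpha+2\beta+\beta^2/\alpha\bigr)=F$, i.e.\ $\psi\in I(M,F)$. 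Therefore
$$I(M,F)=\{\,\psi\in I(M,\alpha):\psi^*\beta=\beta\,\}.$$

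Finally, the right-hand side is visibly a subgroup of $I(M,\alpha)$: it contains the identity, and if $\psi_1^*\beta=\beta$ and $\psi_2^*\beta=\beta$ then $(\psi_1\psi_2)^*\beta=\psi_2^*\psi_1^*\beta=\beta$ and $(\psi_1^{-1})^*\beta=\beta$. For topological closedness, equip $I(M,\alpha)$ with the compact--open topology; the pullback map $\psi\longmapsto\psi^*\beta$ (equivalently $\psi\mapsto\psi_*X$ for the dual vector field $X$ of Lemma \ref{a1}) is continuous, and $I(M,F)$ is the preimage of the single point $\{\beta\}$, hence closed. This gives that $I(M,F)$ is a closed subgroup of $I(M,\alpha)$.

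The main obstacle I anticipate is not the group-theoretic bookkeeping but the clean justification of the reconstruction step: one must verify the \emph{strict} inequality $|\beta(x,y)|<\alpha(x,y)$ for all $y\neq0$, so that $\alpha(x,y)$ is unambiguously the larger root of the quadratic, and one must argue that this algebraic inversion of $\alpha$ and $\beta$ from $F$ is genuinely intrinsic, so that it commutes with pullback by an arbitrary diffeomorphism. Once the pointwise reconstruction is in place, the inclusion $I(M,F)\hookrightarrow I(M,\alpha)$, its subgroup property, and its closedness all follow routinely.
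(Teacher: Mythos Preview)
Your proof is correct and rests on the same parity idea as the paper's argument: replace $y$ by $-y$ and use that $\alpha$ is even and $\beta$ is odd to separate the two. The execution differs, however. The paper cross-multiplies the identity $F(x,y)=F(\phi(x),d\phi_x(y))$ to obtain a polynomial relation in $\alpha(x,y)$, $\alpha(\phi(x),d\phi_x(y))$ and the two $\beta$-terms, then subtracts and adds the $y\mapsto -y$ version to isolate first $\langle X\rvert_x,y\rangle=\langle X\rvert_{\phi(x)},d\phi_x(y)\rangle$ and then $\alpha(x,y)=\alpha(\phi(x),d\phi_x(y))$. You instead reconstruct $\beta$ directly as the odd part $\tfrac14(F(y)-F(-y))$ and recover $\alpha$ as the larger root of a quadratic, using $\lVert\beta\rVert_\alpha<1$ to break the ambiguity; this is cleaner and makes explicit that $(\alpha,\beta)$ is intrinsic to $F$. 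Two further points where you go beyond the paper: you obtain the sharper description $I(M,F)=\{\psi\in I(M,\alpha):\psi^*\beta=\beta\}$ (the paper only shows the inclusion $I(M,F)\subset I(M,\alpha)$ together with $d\phi_x(X\rvert_x)=X\rvert_{\phi(x)}$), and you actually argue closedness via continuity of $\psi\mapsto\psi^*\beta$, whereas the paper simply asserts it.
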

	\begin{proof}
		Let $ x \in M$ and $\phi: (M,F) \longrightarrow (M,F)$ be an isometry. Therefore,   we have 
		$$ F(x,y) = F( \phi(x), d \phi_x(y)), \ \  \forall y\  \in T_x M.$$
		By  Lemma \ref{a1}, we get
		\begin{equation*} 
		\dfrac{(\alpha\left( x, y\right)+\left\langle  X\rvert_x, y\right\rangle)^2} {\alpha\left( x, y\right) } 
		=\dfrac{(\alpha\left( \phi(x), d \phi_x(y)\right)+\left\langle  X\rvert_{\phi(x)}, d \phi_x(y)\right\rangle)^2} {\alpha\left( \phi(x), d \phi_x(y)\right) },
		\end{equation*}
		which gives us 
		\begin{equation}{\label{sq1}}
		\begin{split}
		&	\alpha\left( \phi(x), d \phi_x(y)\right) \alpha^2\left( x, y\right)+ \alpha\left( \phi(x), d \phi_x(y)\right)\left\langle  X\rvert_x, y\right\rangle^2  +2\alpha\left( \phi(x), d \phi_x(y)\right)
		\alpha\left( x, y\right)\left\langle  X\rvert_x, y\right\rangle\\
		&	=\alpha\left( x, y\right) \alpha^2\left( \phi(x), d \phi_x(y)\right) +\alpha\left( x, y\right) \left\langle  X\rvert_{\phi(x)}, d \phi_x(y)\right\rangle^2  +2\alpha\left( x, y\right) \alpha\left( \phi(x), d \phi_x(y)\right)\left\langle  X\rvert_{\phi(x)}, d \phi_x(y)\right\rangle
		\end{split} 
		\end{equation}
		Replacing $y $ by $-y$ in equation (\ref{sq1}), we get
		\begin{equation}{\label{sq2}}
		\begin{split}
		&	\alpha\left( \phi(x), d \phi_x(y)\right) \alpha^2\left( x, y\right)+ \alpha\left( \phi(x), d \phi_x(y)\right)\left\langle  X\rvert_x, y\right\rangle^2  -2\alpha\left( \phi(x), d \phi_x(y)\right)
		\alpha\left( x, y\right)\left\langle  X\rvert_x, y\right\rangle\\
		&	=\alpha\left( x, y\right) \alpha^2\left( \phi(x), d \phi_x(y)\right) +\alpha\left( x, y\right) \left\langle  X\rvert_{\phi(x)}, d \phi_x(y)\right\rangle^2  -2\alpha\left( x, y\right) \alpha\left( \phi(x), d \phi_x(y)\right)\left\langle  X\rvert_{\phi(x)}, d \phi_x(y)\right\rangle
		\end{split}
		\end{equation}
		Subtracting  equation (\ref{sq2}) from equation (\ref{sq1}), we get 
		\begin{equation*}
		\alpha\left( \phi(x), d \phi_x(y)\right)
		\alpha\left( x, y\right)\left\langle  X\rvert_x, y\right\rangle
		=\alpha\left( x, y\right) \alpha\left( \phi(x), d \phi_x(y)\right)\left\langle  X\rvert_{\phi(x)}, d \phi_x(y)\right\rangle,
		\end{equation*}
		which implies
		\begin{equation}{\label{sq3}}
		\left\langle  X\rvert_x, y\right\rangle = \left\langle  X\rvert_{\phi(x)}, d \phi_x(y)\right\rangle. 
		\end{equation}
		Adding  equations (\ref{sq1}) and (\ref{sq2}) and using  equation (\ref{sq3}), we get 
		\begin{equation}
		\begin{split}
		&	\alpha\left( \phi(x), d \phi_x(y)\right) \alpha^2\left( x, y\right)+ \alpha\left( \phi(x), d \phi_x(y)\right)\left\langle  X\rvert_x, y\right\rangle^2  \\
		&	=\alpha\left( x, y\right) \alpha^2\left( \phi(x), d \phi_x(y)\right) +\alpha\left( x, y\right) \left\langle  X\rvert_{x}, y\right\rangle^2, 
		\end{split}
		\end{equation}
		which leads to 
		\begin{equation}
		\alpha\left( x, y\right)= \alpha\left( \phi(x), d \phi_x(y)\right).
		\end{equation}
		Therefore $ \phi $ is an isometry with respect to the Riemannian metric $\alpha$ and $ d \phi_x\left( X\rvert_x\right) = X\rvert_{\phi(x)}.$ Thus $ I(M,F)$ is a closed subgroup of $ I(M,\alpha).$ 
	\end{proof}
	From Lemma (\ref{a2}), we conclude that if $ (M,F)$ is a  homogeneous Finsler space with square metric  $F= \dfrac{(\alpha+\beta)^2}{\alpha}, $
	then the Riemannian space  $(M, \alpha)$ is homogeneous. Further, $M$ can be written as a coset space $G/H$, where $G= I(M,F)$ is a Lie transformation group of $M$ and $H$,  the compact isotropy subgroup $I_a(M,F)$ of $ I(M,F)$ at some point $a \in M$ \cite{DH}. Let $ \mathfrak g$ and $ \mathfrak h $ be the Lie algebras of the Lie groups $G$ and $H$ respectively. If $ \mathfrak g$ can be written as a direct sum of subspaces $\mathfrak h$ and $\mathfrak k$ of $\mathfrak g$   such that $\text{Ad}(h) \mathfrak k \subset \mathfrak k \ \ \forall \  h \in H,$ then from definition \ref{defNomizu}, $(G/H,F)$ is a reductive homogeneous space. \\
	
	%Note that a  Finsler metric $F$ can be viewed as a $G$-invariant Finsler metric on $M$. Thus, we can say that any homogeneous Finsler manifold can be written as a coset space of a connected Lie group with an invariant Finsler metric.\\
	Therefore, we can write, any homogeneous Finsler space as a coset space of a connected Lie group with an invariant Finsler metric. Here, the Finsler metric $F$ is viewed as $G$ invariant Finsler metric on $M$.\\
	
	%	Next, we prove the following theorem, which  is necessary for further calculations:
	\begin{theorem}{\label{thm3.1}}
		Let   $F= \dfrac{(\alpha+\beta)^2}{\alpha}$ be a $G$-invariant square metric on $G/H,$ $X$ the vector field  corresponding to 1-form $\beta $. Then $ \alpha  $ is a  $G$-invariant Riemannian metric  and the vector field $X$ is  also $G$-invariant. 	
	\end{theorem}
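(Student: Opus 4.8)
The plan is to obtain Theorem \ref{thm3.1} as an essentially immediate consequence of Lemma \ref{a2} and the computation inside its proof. By definition, $F$ being $G$-invariant on $M=G/H$ means that every $g\in G$ acts on $M$ by a diffeomorphism $\tau_g$ (namely $\tau_g(xH)=gxH$) that is an isometry of $(M,F)$; in particular $\tau_g\in I(M,F)$ for each $g$. So it suffices to understand how an arbitrary $\phi\in I(M,F)$ interacts with the pair $(\alpha,X)$, which by Lemma \ref{a1} carries the same information as the square metric $F$.

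First I would record the two pointwise identities that the proof of Lemma \ref{a2} already establishes for any $\phi\in I(M,F)$ and all $x\in M$, $y\in T_xM$: the relation (\ref{sq3}),
$$\left\langle X|_x,\,y\right\rangle=\left\langle X|_{\phi(x)},\,d\phi_x(y)\right\rangle,$$
and, deduced from it,
$$\alpha(x,y)=\alpha\bigl(\phi(x),\,d\phi_x(y)\bigr).$$
These are extracted from the single scalar equality $F(x,y)=F(\phi(x),d\phi_x(y))$ by clearing the $\alpha$ in the denominator, evaluating also at $-y$, and separating the part odd in $y$ (which, since $\alpha$ is even and $\beta$ is odd in $y$, yields the $\beta$-identity) from the part even in $y$ (which, after substituting the $\beta$-identity, yields the $\alpha$-identity). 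Because the inner product $\langle\,\cdot\,,\,\cdot\,\rangle$ induced by $\alpha$ on $T_xM$ is nondegenerate, the first identity is equivalent to $d\phi_x(X|_x)=X|_{\phi(x)}$.

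Second, I would apply these two identities to $\phi=\tau_g$ for every $g\in G$. The identity $\alpha(x,y)=\alpha(\tau_g(x),d(\tau_g)_x(y))$ says that each $\tau_g$ is an isometry of $(M,\alpha)$, i.e.\ that $\alpha$ is a $G$-invariant Riemannian metric on $G/H$ --- this is precisely the restriction to $G$ of the inclusion $I(M,F)\subset I(M,\alpha)$ of Lemma \ref{a2}. The identity $d(\tau_g)_x(X|_x)=X|_{\tau_g(x)}$, valid for all $g\in G$ and all $x\in M$, is by definition the statement that the vector field $X$ is $G$-invariant. Together these give the theorem.

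Since the genuine content --- the even/odd splitting in $y$ that disentangles the $\alpha$-part from the $\beta$-part of an $F$-isometry relation --- has already been carried out in Lemma \ref{a2}, I do not expect any real obstacle at the level of Theorem \ref{thm3.1}; the theorem is really a repackaging of that lemma together with the definition of $G$-invariant metric. The points that still need a little care are organizational: that $X$ is a globally defined smooth vector field on $M$ and that $\alpha(X|_x)<1$ for all $x$ (both supplied by Lemma \ref{a1}, via the $\alpha$-induced isomorphism $T_x^{*}M\cong T_xM$), and the remark that "$d\phi_x(X|_x)=X|_{\phi(x)}$ for every $\phi$ coming from $G$" is indeed the correct notion of $G$-invariance of a vector field on the homogeneous space $G/H$.
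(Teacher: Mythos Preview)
Your proposal is correct, and the mathematical core --- the even/odd splitting in $y$ that separates the $\alpha$-part from the $\beta$-part --- is identical to the paper's. The packaging differs slightly: the paper does not invoke Lemma~\ref{a2} but instead reruns the same computation at the origin $H$, starting from the isotropy condition $F(y)=F(\mathrm{Ad}(h)y)$ for $h\in H$ and $y\in\mathfrak{k}$, and concluding $\alpha(\mathrm{Ad}(h)y)=\alpha(y)$ and $\mathrm{Ad}(h)X=X$. Your route applies the already-proved Lemma~\ref{a2} to each left translation $\tau_g$, which is a bit more economical and works globally on $M=G/H$; the paper's route stays at the Lie-algebra level and yields $\mathrm{Ad}(H)$-invariance of $(\alpha|_{\mathfrak{k}},X)$, which is the standard equivalent formulation of $G$-invariance on a reductive homogeneous space. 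One small point of order in your write-up: the equivalence $\langle X|_x,y\rangle=\langle X|_{\phi(x)},d\phi_x(y)\rangle \Longleftrightarrow d\phi_x(X|_x)=X|_{\phi(x)}$ uses that $d\phi_x$ is an $\alpha$-isometry, so you should state the $\alpha$-identity before drawing that conclusion.
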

	\begin{proof}
		Since $F$ is a $G$-invariant metric on $G/H$, we have
		$$ F\left( y\right)=F\left( \text{Ad} \left(h \right) y \right), \; \; \forall \;  h \in H, \; \;  y \in \mathfrak k.  $$ 
		By Lemma \ref{a1}, we get
		\begin{equation*} 
		\dfrac{(\alpha\left( y\right)+\left\langle  X, y\right\rangle)^2} {\alpha\left( y\right)} 
		=\dfrac{(\alpha\left( \text{Ad}\left(h \right) y\right)+\left\langle  X, \text{Ad}\left(h \right) y\right\rangle)^2} {\alpha\left( \text{Ad}\left(h \right) y\right) }.
		\end{equation*}
		After simplification, we get
		\begin{equation}{\label{sq5}}
		\begin{split}
		&\alpha\left( \text{Ad}\left(h \right) y\right)\alpha^2\left( y\right)+ \alpha\left( \text{Ad}\left(h \right) y\right)\left\langle  X, y\right\rangle^2
		+2\alpha\left( \text{Ad}\left(h \right) y\right) \alpha\left( y\right)\left\langle  X, y\right\rangle\\
		&	=\alpha\left( y\right) \alpha^2\left( \text{Ad}\left(h \right) y\right) +\alpha\left( y\right)\left\langle  X, \text{Ad}\left(h \right) y\right\rangle^2
		+2\alpha\left( y\right)	\alpha\left( \text{Ad}\left(h \right) y\right)\left\langle  X, \text{Ad}\left(h \right) y\right\rangle.
		\end{split} 
		\end{equation}
		Replacing $y $ by $-y$ in  equation (\ref{sq5}), we get
		\begin{equation}{\label{sq6}}
		\begin{split}
		&\alpha\left( \text{Ad}\left(h \right) y\right)\alpha^2\left( y\right)+ \alpha\left( \text{Ad}\left(h \right) y\right)\left\langle  X, y\right\rangle^2
		-2\alpha\left( \text{Ad}\left(h \right) y\right) \alpha\left( y\right)\left\langle  X, y\right\rangle\\
		&	=\alpha\left( y\right) \alpha^2\left( \text{Ad}\left(h \right) y\right) +\alpha\left( y\right)\left\langle  X, \text{Ad}\left(h \right) y\right\rangle^2
		-2\alpha\left( y\right)	\alpha\left( \text{Ad}\left(h \right) y\right)\left\langle  X, \text{Ad}\left(h \right) y\right\rangle.
		\end{split} 
		\end{equation}
		Subtracting  equation (\ref{sq6}) from equation (\ref{sq5}), we get 
		\begin{equation*}
		\begin{split}
		\alpha\left( \text{Ad}\left(h \right) y\right) \alpha\left( y\right)\left\langle  X, y\right\rangle=\alpha\left( y\right)	\alpha\left( \text{Ad}\left(h \right) y\right)\left\langle  X, \text{Ad}\left(h \right) y\right\rangle,
		\end{split} 
		\end{equation*}
		which gives us 
		\begin{equation}{\label{sq7}}
		\left\langle  X, y\right\rangle=	\left\langle  X, \text{Ad}\left(h \right) y\right\rangle. 
		\end{equation}
		Adding  equations (\ref{sq5}) and (\ref{sq6}) and using  equation (\ref{sq7}), we get 
		\begin{equation*}
		\begin{split}
		\alpha\left( \text{Ad}\left(h \right) y\right)\alpha^2\left( y\right)+ \alpha\left( \text{Ad}\left(h \right) y\right)\left\langle  X, y\right\rangle^2
		=\alpha\left( y\right) \alpha^2\left( \text{Ad}\left(h \right) y\right) +\alpha\left( y\right)\left\langle  X,  y\right\rangle^2
		\end{split} 
		\end{equation*}
		which leads to
		\begin{equation}
		\alpha\left(  y\right)= \alpha\left( \text{Ad}\left(h \right) y\right).
		\end{equation}
		Therefore $ \alpha $ is a $G$-invariant Riemannian metric and $ \text{Ad}\left(h \right) X= X,$ which proves that $X$ is also $G$-invariant. 
	\end{proof}
	\noindent 	Next, for a homogeneous Finsler space with Randers changed square metric $F=\dfrac{(\alpha+\beta)^2}{\alpha}+\beta$, in theorem \ref{thm3.2}, we prove the existence of invariant vector field corresponding to $1$-form $\beta.$  For this, first we prove  following lemmas:

	\begin{lemma}{\label{b1}}
		Let $ (M,\alpha)$ be a Riemannian space and $\beta=b_i y^i$, a   1-form with $\lVert \beta \rVert = \sqrt{b_i b^i} < 1$. Then the Randers changed square Finsler metric $F=\dfrac{(\alpha+\beta)^2}{\alpha}+\beta$,  consists of a Riemannian metric $\alpha$ along with a smooth vector field $X$ on $M$ with $\alpha\left( X\rvert_x\right) < 1$, $ \forall \  x \in M$, 
		i.e., 
		$$ F\left( x, y\right)=  \dfrac{(\alpha\left( x, y\right)+\left\langle  X\rvert_x, y\right\rangle)^2}{\alpha\left( x, y\right) }+\left\langle  X\rvert_x, y\right\rangle, \ \ x \in M, \ \ y \in T_x M,$$ 
		where $\left\langle \ , \ \right\rangle $ is the inner product induced by the Riemannian metric $\alpha.$
	\end{lemma}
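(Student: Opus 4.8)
The plan is to imitate the argument of Lemma \ref{a1} almost verbatim, since the Randers change $F\mapsto F+\beta$ only appends a term that is linear in $y$ and is controlled by the very same $1$-form $\beta$. First I would recall that the Riemannian metric $\alpha=\sqrt{a_{ij}(x)y^iy^j}$ restricts on each tangent space $T_xM$ to the inner product $\langle u,v\rangle=a_{ij}u^iv^j$; this induces the dual inner product $\langle dx^i,dx^j\rangle=a^{ij}$ on $T_x^{*}M$, where $(a^{ij})$ is the inverse of $(a_{ij})$, and hence a linear isomorphism $T_x^{*}M\to T_xM$. Pushing the $1$-form $\beta=b_iy^i$ through this isomorphism produces a smooth vector field $X$ on $M$ with $X\rvert_x=b^i\,\partial/\partial x^i$ and $b^i=a^{ij}b_j$.

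Next I would check the two identities that make the statement precise. For $y=y^j\,\partial/\partial x^j\in T_xM$ one has $\langle X\rvert_x,y\rangle=b^iy^ja_{ij}=b_jy^j=\beta(y)$, so the extra linear term in the Randers-changed metric is exactly $\langle X\rvert_x,y\rangle$; substituting $\beta(y)=\langle X\rvert_x,y\rangle$ into $F=(\alpha+\beta)^2/\alpha+\beta$ gives the asserted formula. Similarly $\alpha^2(X\rvert_x)=a_{ij}b^ib^j=a^{ij}b_ib_j=\lVert\beta\rVert^2<1$, so that $\alpha(X\rvert_x)<1$ for every $x\in M$.

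There is essentially no obstacle to overcome here; the only point worth stressing is that the correspondence $\beta\leftrightarrow X$ depends solely on the pair $(\alpha,\beta)$ and is entirely insensitive to the particular $(\alpha,\beta)$-combination defining $F$, so the presence of the additional summand $\beta$ in the Randers change does not affect the construction at all. If in addition one wishes to confirm that this $F$ is genuinely a Finsler metric, that is the separate verification, via Lemma \ref{Shenlemma}, that $\phi(s)=1+3s+s^{2}$ satisfies $\phi(s)>0$ and $\phi(s)-s\phi'(s)+(b^{2}-s^{2})\phi''(s)>0$ on the relevant interval, which holds once $\lVert\beta\rVert$ is taken small enough.
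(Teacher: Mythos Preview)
Your proposal is correct and follows essentially the same argument as the paper: construct $X$ from $\beta$ via the musical isomorphism determined by $\alpha$, verify $\langle X\rvert_x,y\rangle=\beta(y)$, and bound $\alpha(X\rvert_x)$ by $\lVert\beta\rVert<1$. The paper's proof of this lemma is indeed a verbatim repetition of the proof of Lemma~\ref{a1}, and your observation that the Randers change plays no role in the construction is exactly the point.
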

	\begin{proof}
		We know that  the restriction of a Riemannian metric to a tangent space is an inner product. Therefore, the bilinear form $ \left\langle u, v \right\rangle = a_{ij}u^i v^j, \ \ u, v \ \in  T_x M  $ is an inner product on $T_x M$ for $ x \in M, $ and this inner product induces an inner product on $T^{*}_{x}M,$ the cotangent space of $M$ at $x$ which gives us $  \left\langle dx^i, dx^j \right\rangle = a^{ij}.$ A linear isomorphism exists between $T^{*}_{x}M$ and $T_{x}M,$ which can be defined by using this inner product. It follows that the   1-form $\beta$ corresponds to a smooth vector field $X$ on $M$, which can be written as  $$ X\rvert_x = b^i \dfrac{\partial}{\partial x^i}, \text{\ \ where} \ \   b^i= a^{ij} b_j.$$
		Then, for $ y \in T_xM, $ we have
		$$ \left\langle  X\rvert_x, y\right\rangle = \left\langle b^i \dfrac{\partial}{\partial x^i} \ , \  y^j \dfrac{\partial}{\partial x^j} \right\rangle = b^i y^j a_{ij} = b_j y^j = \beta(y).$$
		Also, we have  
		$$	\alpha^2 (x,y)=a_{ij}y^i y^j,$$
		\text{which implies\ \ }
		$$\alpha^2\left( X\rvert_x\right) =a_{ij}b^i b^j  = \lVert \beta \rVert^2<1,$$
		i. e., 
		$$ \alpha\left( X\rvert_x\right)   <1. $$ This completes the proof.
	\end{proof}
	
	\begin{lemma}{\label{b2}}
		Let $ (M,F)$ be a Finsler space with Randers changed square Finsler metric  $F=\dfrac{(\alpha+\beta)^2}{\alpha}+\beta$. Let $ I(M, F)$  be the group of isometries  of $ (M,F)$ and $ I(M, \alpha)$ be that of  Riemannian  space $ (M,\alpha).$ Then   $ I(M, F)$   is a closed subgroup of  $ I(M, \alpha).$  
	\end{lemma}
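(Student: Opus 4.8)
The plan is to follow the strategy of Lemma \ref{a2} essentially verbatim, the only new wrinkle being the extra additive term $\beta$; since the Randers change merely alters the coefficient of the part of $F$ that is odd in $y$, it does not disturb the argument. First I would invoke Lemma \ref{b1} to rewrite the isometry condition for a map $\phi\colon (M,F)\to (M,F)$, namely $F(x,y)=F(\phi(x),d\phi_x(y))$ for all $y\in T_xM$, as
\[
\frac{(\alpha(x,y)+\langle X|_x,y\rangle)^2}{\alpha(x,y)}+\langle X|_x,y\rangle
=\frac{(\alpha(\phi(x),d\phi_x(y))+\langle X|_{\phi(x)},d\phi_x(y)\rangle)^2}{\alpha(\phi(x),d\phi_x(y))}+\langle X|_{\phi(x)},d\phi_x(y)\rangle .
\]
Clearing the denominators $\alpha(x,y)$ and $\alpha(\phi(x),d\phi_x(y))$ turns this into a polynomial identity in $\alpha$ and the two inner products, the exact analogue of (\ref{sq1}).

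Next I would replace $y$ by $-y$. Because $\alpha$ is even in $y$ while $\langle X|_\cdot,\cdot\rangle$ is odd in $y$ and $d\phi_x$ is linear (so $d\phi_x(-y)=-d\phi_x(y)$), this substitution changes the sign of precisely the terms that are linear in the inner products, producing a second identity analogous to (\ref{sq2}). Subtracting the two identities cancels the even part and, after dividing by the nonvanishing factor $\alpha(x,y)\,\alpha(\phi(x),d\phi_x(y))$, yields
\[
\langle X|_x,y\rangle=\langle X|_{\phi(x)},d\phi_x(y)\rangle .
\]
Adding the two identities and substituting this relation leaves an equation that factors as $(A-A')(AA'-B^2)=0$, where $A=\alpha(x,y)$, $A'=\alpha(\phi(x),d\phi_x(y))$ and $B=\langle X|_x,y\rangle=\langle X|_{\phi(x)},d\phi_x(y)\rangle$.

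The only point requiring care — and the sole place the hypothesis $\lVert\beta\rVert<1$ is used — is excluding the second factor. By the Cauchy--Schwarz inequality together with $\alpha(X|_x)<1$ from Lemma \ref{b1}, we have $|B|=|\langle X|_x,y\rangle|\le \alpha(X|_x)\,\alpha(x,y)<A$, and similarly $|B|=|\langle X|_{\phi(x)},d\phi_x(y)\rangle|<A'$; hence $B^2<AA'$ and $AA'-B^2\neq0$. Therefore $A=A'$, i.e. $\alpha(x,y)=\alpha(\phi(x),d\phi_x(y))$, so every $F$-isometry is an $\alpha$-isometry, and combined with the relation above it also satisfies $d\phi_x(X|_x)=X|_{\phi(x)}$. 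Thus $I(M,F)$ is a subgroup of $I(M,\alpha)$, and it is closed in $I(M,\alpha)$ since $F$ is a continuous function of $\alpha$ and $X$, so any limit (in the compact-open topology) of $F$-isometries again preserves $F$. I expect no real obstacle here: the argument is structurally identical to Lemma \ref{a2}, the main work being the bookkeeping of the polynomial identities after clearing denominators and checking that the coefficient of the odd part is nonzero.
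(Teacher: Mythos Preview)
Your proposal is correct and follows the same route as the paper: clear denominators, split into even and odd parts in $y$ by the substitution $y\mapsto -y$, subtract to identify the inner products, then add to force $\alpha(x,y)=\alpha(\phi(x),d\phi_x(y))$. In fact you are more careful than the paper at the final step: the paper simply writes ``which leads to $\alpha(x,y)=\alpha(\phi(x),d\phi_x(y))$'' after obtaining $A'A^2+A'B^2=AA'^2+AB^2$, whereas you correctly factor this as $(A-A')(AA'-B^2)=0$ and use Cauchy--Schwarz together with $\lVert\beta\rVert<1$ to rule out the second factor, and you also supply a word on why the subgroup is closed.
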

	\begin{proof}
		Let $ x \in M$ and  $\phi: (M,F) \longrightarrow (M,F)$ be an isometry. Therefore,   we have 
		$$ F(x,y) = F( \phi(x), d \phi_x(y)), \ \ \forall y \in T_x M.$$
		By  Lemma \ref{a1}, we get
		\begin{equation*} 
		\dfrac{(\alpha\left( x, y\right)+\left\langle  X\rvert_x, y\right\rangle)^2} {\alpha\left( x, y\right) } +\left\langle  X\rvert_x, y\right\rangle
		=\dfrac{(\alpha\left( \phi(x), d \phi_x(y)\right)+\left\langle  X\rvert_{\phi(x)}, d \phi_x(y)\right\rangle)^2} {\alpha\left( \phi(x), d \phi_x(y)\right) }+\left\langle  X\rvert_{\phi(x)}, d \phi_x(y)\right\rangle,
		\end{equation*}
		which gives us 
		\begin{equation}{\label{sq.1}}
		\begin{split}
		&	\alpha\left( \phi(x), d \phi_x(y)\right) \alpha^2\left( x, y\right)+ \alpha\left( \phi(x), d \phi_x(y)\right)\left\langle  X\rvert_x, y\right\rangle^2  +3\alpha\left( \phi(x), d \phi_x(y)\right)
		\alpha\left( x, y\right)\left\langle  X\rvert_x, y\right\rangle\\
		&	=\alpha\left( x, y\right) \alpha^2\left( \phi(x), d \phi_x(y)\right) +\alpha\left( x, y\right) \left\langle  X\rvert_{\phi(x)}, d \phi_x(y)\right\rangle^2 
		+3\alpha\left( x, y\right) \alpha\left( \phi(x), d \phi_x(y)\right)\left\langle  X\rvert_{\phi(x)}, d \phi_x(y)\right\rangle
		\end{split} 
		\end{equation}
		Replacing $y $ by $-y$ in equation (\ref{sq.1}), we get
		\begin{equation}{\label{sq.2}}
		\begin{split}
		&	\alpha\left( \phi(x), d \phi_x(y)\right) \alpha^2\left( x, y\right)+ \alpha\left( \phi(x), d \phi_x(y)\right)\left\langle  X\rvert_x, y\right\rangle^2  -3\alpha\left( \phi(x), d \phi_x(y)\right)
		\alpha\left( x, y\right)\left\langle  X\rvert_x, y\right\rangle\\
		&	=\alpha\left( x, y\right) \alpha^2\left( \phi(x), d \phi_x(y)\right) +\alpha\left( x, y\right) \left\langle  X\rvert_{\phi(x)}, d \phi_x(y)\right\rangle^2  
		-3\alpha\left( x, y\right) \alpha\left( \phi(x), d \phi_x(y)\right)\left\langle  X\rvert_{\phi(x)}, d \phi_x(y)\right\rangle
		\end{split}
		\end{equation}
		Subtracting  equation (\ref{sq.2}) from equation (\ref{sq.1}), we get 
		\begin{equation*}
		\alpha\left( \phi(x), d \phi_x(y)\right)
		\alpha\left( x, y\right)\left\langle  X\rvert_x, y\right\rangle
		=\alpha\left( x, y\right) \alpha\left( \phi(x), d \phi_x(y)\right)\left\langle  X\rvert_{\phi(x)}, d \phi_x(y)\right\rangle,
		\end{equation*}
		which implies
		\begin{equation}{\label{sq.3}}
		\left\langle  X\rvert_x, y\right\rangle = \left\langle  X\rvert_{\phi(x)}, d \phi_x(y)\right\rangle. 
		\end{equation}
		Adding  equations (\ref{sq.1}) and (\ref{sq.2}) and using  equation (\ref{sq.3}), we get 
		\begin{equation}
		\begin{split}
		&	\alpha\left( \phi(x), d \phi_x(y)\right) \alpha^2\left( x, y\right)+ \alpha\left( \phi(x), d \phi_x(y)\right)\left\langle  X\rvert_x, y\right\rangle^2  \\
		&	=\alpha\left( x, y\right) \alpha^2\left( \phi(x), d \phi_x(y)\right) +\alpha\left( x, y\right) \left\langle  X\rvert_{x}, y\right\rangle^2, 
		\end{split}
		\end{equation}
		which leads to 
		\begin{equation}
		\alpha\left( x, y\right)= \alpha\left( \phi(x), d \phi_x(y)\right).
		\end{equation}
		Therefore $ \phi $ is an isometry with respect to the Riemannian metric $\alpha$ and $ d \phi_x\left( X\rvert_x\right) = X\rvert_{\phi(x)}.$ Thus $ I(M,F)$ is a closed subgroup of $ I(M,\alpha).$ 
	\end{proof}
	
	From Lemma (\ref{b2}), we conclude that if  $ (M,F)$ is a  homogeneous Finsler space with Randers change of square  metric  $F= \dfrac{(\alpha+\beta)^2}{\alpha}+\beta,$ then  the Riemannian space $(M, \alpha)$ is homogeneous. Further,  $M$ can be written as a coset space $G/H$, where $G= I(M,F)$ is a Lie transformation group of $M$ and $H$, 
	the compact isotropy subgroup $I_a(M,F)$ of $ I(M,F)$ at some point $a \in M$ \cite{DH}. Let $ \mathfrak g$ and $ \mathfrak h $ be the Lie algebras of the Lie groups $G$ and $H$ respectively. If $ \mathfrak g$ can be written as a direct sum of subspaces $\mathfrak h$ and $\mathfrak k$ of $\mathfrak g$   such that $\text{Ad}(h) \mathfrak k \subset \mathfrak k \ \ \forall \  h \in H,$ then from definition \ref{defNomizu}, $(G/H,F)$ is a reductive homogeneous space. \\
	
	%Note that a  Finsler metric $F$ can be viewed as a $G$-invariant Finsler metric on $M$. Thus, we can say that any homogeneous Finsler manifold can be written as a coset space of a connected Lie group with an invariant Finsler metric.\\
	Therefore, we can write, any homogeneous Finsler space as a coset space of a connected Lie group with an invariant Finsler metric. Here, the Finsler metric $F$ is viewed as $G$ invariant Finsler metric on $M$.\\
	
	%	Next, we prove the following theorem, which  is necessary for further calculations:
	\begin{theorem}{\label{thm3.2}}
		Let   $F= \dfrac{(\alpha+\beta)^2}{\alpha}+\beta$ be a $G$-invariant Randers changed square metric on $G/H$. Then $ \alpha  $ is a  $G$-invariant Riemannian metric and the vector field $X$ corresponding to the 1-form $\beta $ is also $G$-invariant. 	
	\end{theorem}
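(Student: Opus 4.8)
The plan is to follow the same line of argument as in the proof of Theorem~\ref{thm3.1}, with Lemma~\ref{b1} and Lemma~\ref{b2} playing the roles of Lemma~\ref{a1} and Lemma~\ref{a2}. By Lemma~\ref{b2}, the Riemannian space $(M,\alpha)$ is homogeneous, so we may write $M = G/H$ with $G = I(M,F)$, $H = I_a(M,F)$ compact, and fix a reductive decomposition $\mathfrak g = \mathfrak h + \mathfrak k$, identifying $T_H(G/H)$ with $\mathfrak k$. Since $F$ is $G$-invariant, it is in particular $\text{Ad}(h)$-invariant on $\mathfrak k$, i.e.\ $F(y) = F(\text{Ad}(h)y)$ for all $h \in H$ and all $y \in \mathfrak k$.

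First I would substitute the representation of $F$ furnished by Lemma~\ref{b1}, namely $F(y) = \dfrac{(\alpha(y) + \langle X, y\rangle)^2}{\alpha(y)} + \langle X, y\rangle$, into this invariance identity and clear the denominators $\alpha(y)\,\alpha(\text{Ad}(h)y)$, both of which are nonzero on $\mathfrak k \setminus \{0\}$. Using $(\alpha + \beta)^2 + \alpha\beta = \alpha^2 + 3\alpha\beta + \beta^2$, this produces a polynomial identity each of whose terms is either even or odd in $\langle X, \cdot\rangle$; compared with Theorem~\ref{thm3.1}, the only difference is that the Randers term changes the coefficient of the cross term $\alpha\langle X, \cdot\rangle$ from $2$ to $3$, which does not disturb the parity structure.

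Next I would use that $\alpha$ is even and $\langle X, \cdot\rangle$ is odd under $y \mapsto -y$: replacing $y$ by $-y$ and subtracting the two identities kills all even terms and leaves, after dividing by $\alpha(y)\,\alpha(\text{Ad}(h)y) \neq 0$, the relation $\langle X, y\rangle = \langle X, \text{Ad}(h)y\rangle$. Adding the two identities and substituting this relation back collapses the surviving terms to $\alpha(\text{Ad}(h)y)\,\alpha^2(y) = \alpha(y)\,\alpha^2(\text{Ad}(h)y)$, hence $\alpha(y) = \alpha(\text{Ad}(h)y)$ for all $h \in H$; thus $\alpha$ is $G$-invariant. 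Finally, with $\alpha$ now known to be $\text{Ad}(h)$-invariant, its associated inner product satisfies $\langle \text{Ad}(h)X, \text{Ad}(h)y\rangle = \langle X, y\rangle = \langle X, \text{Ad}(h)y\rangle$ for all $y \in \mathfrak k$; since $\text{Ad}(h)$ maps $\mathfrak k$ bijectively onto itself, $\langle \text{Ad}(h)X - X, z\rangle = 0$ for every $z \in \mathfrak k$, and therefore $\text{Ad}(h)X = X$, which shows that $X$ is $G$-invariant.

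As for the main obstacle: the argument is entirely routine once the even/odd trick is in place, so there is no real difficulty here; the only points requiring a word of care are that dividing by $\alpha$ is legitimate (it is positive on nonzero vectors) and that the extra summand $\beta$ introduced by the Randers change does not spoil the parity decomposition — which it does not, since $\beta$ is itself odd in $y$ and so merely contributes to the already-present odd part of the identity.
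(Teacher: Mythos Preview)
Your proposal is correct and follows essentially the same route as the paper's own proof: substitute the representation from Lemma~\ref{b1} into the $\text{Ad}(h)$-invariance identity, clear denominators, exploit the even/odd parity under $y\mapsto -y$ to isolate first $\langle X,y\rangle=\langle X,\text{Ad}(h)y\rangle$ and then $\alpha(y)=\alpha(\text{Ad}(h)y)$, and conclude $\text{Ad}(h)X=X$. One small imprecision: after adding the two identities and using $\langle X,y\rangle=\langle X,\text{Ad}(h)y\rangle$, the $\langle X,y\rangle^2$ terms do not literally cancel---you are left with $\alpha(\text{Ad}(h)y)\bigl(\alpha^2(y)+\langle X,y\rangle^2\bigr)=\alpha(y)\bigl(\alpha^2(\text{Ad}(h)y)+\langle X,y\rangle^2\bigr)$, which factors as $\bigl(\alpha(\text{Ad}(h)y)-\alpha(y)\bigr)\bigl(\alpha(y)\alpha(\text{Ad}(h)y)-\langle X,y\rangle^2\bigr)=0$; the second factor cannot vanish identically because $\lVert\beta\rVert_\alpha<1$, so the conclusion stands (the paper glosses over this same step).
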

	\begin{proof}
		Since $F$ is a $G$-invariant metric on $G/H$, we have
		$$ F\left( y\right)=F\left( \text{Ad} \left(h \right) y \right), \; \; \forall \;  h \in H, \; \;  y \in \mathfrak k.  $$ 
		By Lemma \ref{b1}, we get
		\begin{equation*} 
		\dfrac{(\alpha\left( y\right)+\left\langle  X, y\right\rangle)^2} {\alpha\left( y\right)} +\left\langle  X, y\right\rangle
		=\dfrac{(\alpha\left( \text{Ad}\left(h \right) y\right)+\left\langle  X, \text{Ad}\left(h \right) y\right\rangle)^2} {\alpha\left( \text{Ad}\left(h \right) y\right) }+\left\langle  X, \text{Ad}\left(h \right) y\right\rangle.
		\end{equation*}
		After simplification, we get
		\begin{equation}{\label{sq.5}}
		\begin{split}
		&\alpha\left( \text{Ad}\left(h \right) y\right)\alpha^2\left( y\right)+ \alpha\left( \text{Ad}\left(h \right) y\right)\left\langle  X, y\right\rangle^2
		+3\alpha\left( \text{Ad}\left(h \right) y\right) \alpha\left( y\right)\left\langle  X, y\right\rangle\\
		&	=\alpha\left( y\right) \alpha^2\left( \text{Ad}\left(h \right) y\right) +\alpha\left( y\right)\left\langle  X, \text{Ad}\left(h \right) y\right\rangle^2
		+3\alpha\left( y\right)	\alpha\left( \text{Ad}\left(h \right) y\right)\left\langle  X, \text{Ad}\left(h \right) y\right\rangle.
		\end{split} 
		\end{equation}
		Replacing $y $ by $-y$ in  equation (\ref{sq.5}), we get
		\begin{equation}{\label{sq.6}}
		\begin{split}
		&\alpha\left( \text{Ad}\left(h \right) y\right)\alpha^2\left( y\right)+ \alpha\left( \text{Ad}\left(h \right) y\right)\left\langle  X, y\right\rangle^2
		-3\alpha\left( \text{Ad}\left(h \right) y\right) \alpha\left( y\right)\left\langle  X, y\right\rangle\\
		&	=\alpha\left( y\right) \alpha^2\left( \text{Ad}\left(h \right) y\right) +\alpha\left( y\right)\left\langle  X, \text{Ad}\left(h \right) y\right\rangle^2
		-3\alpha\left( y\right)	\alpha\left( \text{Ad}\left(h \right) y\right)\left\langle  X, \text{Ad}\left(h \right) y\right\rangle.
		\end{split} 
		\end{equation}
		Subtracting  equation (\ref{sq.6}) from equation (\ref{sq.5}), we get 
		\begin{equation*}
		\begin{split}
		\alpha\left( \text{Ad}\left(h \right) y\right) \alpha\left( y\right)\left\langle  X, y\right\rangle=\alpha\left( y\right)	\alpha\left( \text{Ad}\left(h \right) y\right)\left\langle  X, \text{Ad}\left(h \right) y\right\rangle,
		\end{split} 
		\end{equation*}
		which gives us 
		\begin{equation}{\label{sq.7}}
		\left\langle  X, y\right\rangle=	\left\langle  X, \text{Ad}\left(h \right) y\right\rangle. 
		\end{equation}
		Adding  equations (\ref{sq.5}) and (\ref{sq.6}) and using  equation (\ref{sq.7}), we get 
		\begin{equation*}
		\begin{split}
		\alpha\left( \text{Ad}\left(h \right) y\right)\alpha^2\left( y\right)+ \alpha\left( \text{Ad}\left(h \right) y\right)\left\langle  X, y\right\rangle^2
		=\alpha\left( y\right) \alpha^2\left( \text{Ad}\left(h \right) y\right) +\alpha\left( y\right)\left\langle  X,  y\right\rangle^2
		\end{split} 
		\end{equation*}
		which leads to
		\begin{equation}
		\alpha\left(  y\right)= \alpha\left( \text{Ad}\left(h \right) y\right).
		\end{equation}
		Therefore $ \alpha $ is a $G$-invariant Riemannian metric and $ \text{Ad}\left(h \right) X= X.$ 
	\end{proof}

	The following theorem   gives us  a complete description of invariant vector fields.
	\begin{theorem}\cite{SDengZHou2006}
		There exists a  bijection between the set of invariant vector fields on $G/H $ and the subspace $$ V= \left\lbrace Y  \in \mathfrak k : \text{Ad} \left( h\right) Y= Y, \; \forall \; h  \in H \right\rbrace. $$
	\end{theorem}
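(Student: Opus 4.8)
The plan is to exhibit the claimed bijection explicitly and verify it is well-defined in both directions. Write $o = eH$ for the origin of $G/H$, and for $g \in G$ let $\tau_g : G/H \to G/H$ be the diffeomorphism induced by left translation, so that a vector field $\widetilde Y$ on $G/H$ is $G$-invariant precisely when $d(\tau_g)_{xH}\bigl(\widetilde Y(xH)\bigr) = \widetilde Y(gxH)$ for all $g,x \in G$. Recall from the reductive decomposition $\mathfrak g = \mathfrak h + \mathfrak k$ that we identify $T_o(G/H)$ with $\mathfrak k$ via $X \mapsto \tfrac{d}{dt}\exp(tX)H\big|_{t=0}$, and that for $h \in H$ the isotropy differential $d(\tau_h)_o$ on $T_o(G/H) \cong \mathfrak k$ is exactly $\mathrm{Ad}(h)|_{\mathfrak k}$: indeed $\tau_h(\exp(tX)H) = h\exp(tX)h^{-1}H = \exp\bigl(t\,\mathrm{Ad}(h)X\bigr)H$, where the reductivity $\mathrm{Ad}(H)\mathfrak k \subset \mathfrak k$ guarantees that the right-hand side again represents a tangent vector in $\mathfrak k$.

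First I would define the forward map by evaluation at the origin, $\widetilde Y \mapsto \widetilde Y(o) \in \mathfrak k$. To see that the image lands in $V$, observe that each $h \in H$ fixes $o$, so $G$-invariance of $\widetilde Y$ gives $\mathrm{Ad}(h)\widetilde Y(o) = d(\tau_h)_o\bigl(\widetilde Y(o)\bigr) = \widetilde Y(h\cdot o) = \widetilde Y(o)$; hence $\widetilde Y(o) \in V$.

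Conversely, given $Y \in V$, I would set $\widetilde Y(gH) := d(\tau_g)_o(Y)$. The step requiring care is well-definedness: if $gH = g'H$ then $g' = gh$ for some $h \in H$, and $d(\tau_{g'})_o(Y) = d(\tau_g)_o\,d(\tau_h)_o(Y) = d(\tau_g)_o\bigl(\mathrm{Ad}(h)Y\bigr) = d(\tau_g)_o(Y)$ because $Y \in V$, so the value is independent of the chosen coset representative. Smoothness of $\widetilde Y$ follows from the smoothness of the $G$-action together with the existence of local smooth sections $s$ of the bundle $G \to G/H$, since locally $\widetilde Y(gH) = d(\tau_{s(gH)})_o(Y)$; and $G$-invariance is immediate from the chain rule, $d(\tau_a)_{gH}\widetilde Y(gH) = d(\tau_a)_{gH}\,d(\tau_g)_o(Y) = d(\tau_{ag})_o(Y) = \widetilde Y(agH)$.

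Finally I would check the two constructions are mutually inverse. Starting from $Y \in V$, evaluating the associated field at $o$ returns $d(\tau_e)_o(Y) = Y$. Starting from a $G$-invariant field $\widetilde Y$, the field reconstructed from $\widetilde Y(o)$ sends $gH$ to $d(\tau_g)_o\bigl(\widetilde Y(o)\bigr) = \widetilde Y(g\cdot o) = \widetilde Y(gH)$ by $G$-invariance. Hence both composites are the identity and the correspondence is a bijection. The only genuinely nontrivial ingredient is the identification of the isotropy representation on $T_o(G/H)$ with $\mathrm{Ad}|_{\mathfrak k}$ — precisely the place where the reductive hypothesis $\mathrm{Ad}(H)\mathfrak k \subset \mathfrak k$ is used — while the remaining verifications are a routine chase with differentials of the translation maps.
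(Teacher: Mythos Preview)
Your argument is correct and is the standard proof of this classical fact about reductive homogeneous spaces. Note, however, that the paper does not actually prove this theorem: it is quoted with the citation \cite{SDengZHou2006} and no proof is supplied. So there is no ``paper's own proof'' to compare against; your write-up is precisely the argument one finds in the cited reference (Deng--Hou), hinging on the identification of the isotropy representation with $\mathrm{Ad}(H)|_{\mathfrak k}$ and the evaluation-at-origin / left-translation-extension correspondence.
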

	%\begin{proposition}
	%
	%\end{proposition}
	%\begin{definition}
	%	The \textbf{Killing form } of a Lie algebra $ \mathfrak{g} $ is a bilinear function $ B : \mathfrak{g} \times \mathfrak{g} \longrightarrow \mathbb{R}$ defined as 
	%	$$ B(x,y)= tr\left( ad(x)ad(y)\right), \ \ \forall \ x, y \ \in \mathfrak{g}.  $$ 
	%\end{definition}
	\section{$S$-curvature of homogeneous Finsler space with 
		%		certain $(\alpha, \beta)$-metrics
	}

	Now, we discuss  $S$-curvature, a quantity used to measure the rate of change of the volume form of a Finsler space along geodesics. Let $V$ be an n-dimensional real vector space having a basis $ \left\lbrace \alpha_i \right\rbrace $ and $F$ be a Minkowski norm on $V$.   Let $Vol\  B$ to be the volume of a subset $B$  of $\mathbb{R}^n$, and $B^n$ be  the open unit ball. The function $ \tau=\tau(y) $ defined as 
	$$\tau(y)=\ln\left(  \dfrac{\sqrt{det(g_{ij}(y))}}{\sigma_F}\right) , \ \ y \ \in\  V -  \{0\}, $$
	where
	$$ \sigma_F= \dfrac{Vol\left( B^n\right) }{Vol\left\lbrace \left( y^i \right)\in \mathbb{R}^n : F\left( y^i\alpha_i\right) < 1   \right\rbrace },  $$
	is called the distortion of $(V, F).$\\
	For a Finsler space $ (M,F),\  \tau= \tau(x,y)$ is  the distortion of Minkowski norm  $F_x$ on $T_xM, \ x \in M.$ Let $\gamma$ be a geodesic with $\gamma(0)=x, \ \dot{\gamma}(0)=y, $ where $ y \in T_xM, $ then $S$-curvature  denoted as  $S(x,y)$  is the rate of change of distortion along the geodesic $\gamma$, i.e., 
	$$ S(x,y)= \dfrac{d}{dt}\bigg\{\tau\bigg( \gamma(t), \dot{\gamma}(t)\bigg) \bigg\}\bigg|_{t=0}.$$
	Here, it is to be noted that $S(x,y)$ is positively homogeneous of degree one, i.e.,  for $ \lambda > 0, $ we have $ S(x,\lambda y)=\lambda S(x,y).$\\
	
	%For any $x \in M$ and $  y \in T_xM,$ if $S$-curvature can be written in the form 
	%$$ S(x,y)= (n+1)\left( c(x) F(y) + \eta(y)\right),$$
	%where $c(x)$ is a smooth function on $M$, and $\eta $ is a closed one-form, then we say, $ (M,F)$ has almost isotropic $S$-curvature. If, in addition, $ \eta =0,  $ we say $ (M,F)$ has isotropic $S$-curvature.
	$S$-curvature of a Finsler space is related to a volume form. There are two important volume forms  in Finsler geometry: the Busemann-Hausdorff volume form $ dV_{BH}=\sigma_{_{BH}}(x)dx$ and the Holmes-Thompson volume form $ dV_{HT}=\sigma_{_{HT}}(x)dx$ defined respectively as 
	
	%The  Busemann-Hausdorff volume form $ dV_{BH}=\sigma_{_{BH}}(x)dx$ is given by
	$$ \sigma_{_{BH}}(x)= \dfrac{Vol\left( B^n\right) }{Vol A },$$
	and 
	%The Holmes-Thompson volume form $ dV_{HT}=\sigma_{_{HT}}(x)dx$ is given by
	$$ \sigma_{_{HT}}(x)= \dfrac{1}{Vol\left( B^n\right)} \int_{A}det\left( g_{ij}\right) dy ,$$\\
	where $ A= \left\lbrace \left( y^i\right) \in \mathbb{R}^n \ \colon\  F\left( x, y^i\frac{\partial}{\partial x^i}\right) < 1 \right\rbrace.$\\
	
	%If $F= \sqrt{g_{ij}(x)y^i y^j}$ is a Riemannian metric, then both the volume forms reduce to a single volume form
	If the Finsler metric $F$ is replaced by a Riemannian metric, then both the volume forms reduce to a single Riemannian  volume form
	$ dV_{HT}=dV_{BH}=\sqrt{det\left( g_{ij}(x) \right) } dx$.\\ 
	
	Next, for the  function $$ T(s)= \phi \left( \phi-s \phi'\right)^{n-2}\left\lbrace  \left( \phi-s \phi'\right) + \left( b^2-s^2\right) \phi'' \right\rbrace, $$
	the volume form $dV= dV_{BH}$ or $dV_{HT}$ is given by $dV= f(b) dV_{\alpha}$, where 
	$$
	f(b)=  \begin{cases}
	\dfrac{\int_{0}^{\pi} \sin^{n-2}t \ dt}{\int_{0}^{\pi}\dfrac{\sin^{n-2} t}{\phi\left( b \cos t\right)^n } \ dt}, & \text{if}\ \  dV= dV_{BH} \\ \\
	\dfrac{ \int_{0}^{\pi}\left( \sin^{n-2} t\right) T\left( b \cos t\right) \ dt }{\int_{0}^{\pi} \sin^{n-2} t \ dt}, & \text{if}\ \  dV= dV_{HT}
	\end{cases}, 
	$$
	and  $dV_{\alpha}= \sqrt{det\left( a_{ij} \right) } dx $  is  the Riemannian volume form of $\alpha.$ \\
	
	The formula for $S$-curvature of an $(\alpha, \beta)$-metric, in local co-ordinate system, introduced by Cheng and Shen \cite{XCheZShe2009}, is as follows:
	\begin{equation}{\label{S-Curv1}}
	S= \left( 2\psi- \dfrac{f'(b)}{bf(b)}\right)\left(r_0+s_o \right) - \dfrac{\Phi}{2 \alpha \Delta^2}\bigg( r_{_{00}}-2\alpha  Qs_{_{0}}\bigg),   
	\end{equation}
	where
	\begin{align*}
	Q&= \dfrac{\phi'}{\phi-s \phi'} \ ,\\
	\Delta &= 1+sQ+\left(b^2-s^2\right)Q'\ ,\\
	\psi&= \dfrac{Q'}{2 \Delta},\\
	\Phi&= \left(sQ'-Q\right) \left( n\Delta + 1 + sQ\right)- \left(b^2-s^2 \right) \left(1+sQ \right)Q'',\\
	r_{ij}&=\dfrac{1}{2} \left(b_{i\mid j}+b_{j \mid i} \right), \ r_j = b^ir_{ij},\  r_0 =r_i y^i,\   r_{00}= r_{ij} y^i y^j,\\
	s_{ij}&= \dfrac{1}{2} \left(b_{i\mid j}-b_{j \mid i} \right), \ s_j= b^i s_{ij} ,
	\  s_0= s_i y^i.
	\end{align*} 
	It is well known \cite{XCheZShe2009} that if the Riemannian length $b$ is constant, then $r_0+s_0=0$. Therefore, in this case, the equation (\ref{S-Curv1}) takes the form
	\begin{equation}{\label{S-Curv2}}
	S= - \dfrac{\Phi}{2 \alpha \Delta^2}\bigg(r_{_{00}}-2 \alpha Qs_{_{0}} \bigg). 
	\end{equation}

	%The importance of S-curvature in  Riemann-Finsler geometry can also be seen in several   Shen's papers (e.g., \cite{ZShen2003}, \cite{ZShen2005}). Almost after a decade of Shen's  papers on S-curvature,
	After Shen's work on $S$-curvature, Cheng and Shen \cite{XCheZShe2009} characterized Finsler metrics with isotropic $S$-curvature in 2009. In the same year,  Deng \cite{SDeng2009} gave an explicit formula for $S$-curvature of homogeneous Randers spaces and he proved that a homogeneous Randers space having almost isotropic $S$-curvature has vanishing $S$-curvature. Later in 2010,  Deng and Wang \cite{SDenXWan2010} gave a formula for $S$-curvature of homogeneous $ (\alpha, \beta) $-metrics.  They also derived a formula for mean Berwald curvature $E_{ij}$ of Randers metric. Recently, Shanker and Kaur \cite{GK2019} have proved that there is a mistake in the formula of $S$-curvature given in  \cite{SDenXWan2010}, and they have given the correct version of the formula for $S$-curvature of  homogeneous $ (\alpha, \beta) $-metrics. Further,  some progress has been done in the study of $S$-curvature of homogeneous Finsler spaces (see \cite{ZHuSDen2012}, \cite{MXuSDen2015} for detail).\\
	\begin{definition}{\label{IsoSCurv}}
		Let $(M,F)$ be an $n$-dimensional  Finsler space. If there exists a smooth function $c(x)$ on $M$ and a closed 1-form $\omega$ such that 
		$$ S(x,y)= (n+1) \bigg( c(x) F(y) + \omega(y)\bigg), \ \ x \in M, \   y\in T_x(M), $$
		then  $(M,F)$ is said to have almost isotropic $S$-curvature.
		In addition, if $\omega$ is zero, then   $(M,F)$ is said to have isotropic $S$-curvature.\\ 
		Also, if  $\omega$ is zero and $c(x)$ is constant, then we say, $(M,F)$ has  constant $S$-curvature.
	\end{definition}
	With above notations, let us recall \cite{GK2019} the following theorem:
	
	\begin{theorem}{\label{thmS}}
		
		Let $F= \alpha \phi(s)$   be a $G$-invariant $(\alpha, \beta)$-metric on the reductive  homogeneous Finsler space $G/H$ with a decomposition of the Lie algebra $\mathfrak{g} = \mathfrak{h} +\mathfrak{k}$. Then the $S$-curvature  is given by 
		\begin{equation}{\label{e10}}
		S(H,y)=  \dfrac{\Phi}{2 \alpha \Delta^2}\left(\bigg<\left[ v, y\right] _{\mathfrak k}, y \bigg> + \alpha Q  \bigg<\left[v, y\right] _{\mathfrak k}, v \bigg>\right),
		\end{equation}
		where $v \in \mathfrak{k}$ corresponds to the 1-form $\beta$ and $\mathfrak{k}$ is identified with the tangent space $T_{H}\left( G/H\right) $ of $G/H$ at the origin $H$.
	\end{theorem}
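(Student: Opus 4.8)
The plan is to specialize the general Cheng–Shen formula \eqref{S-Curv1} (equivalently \eqref{S-Curv2}) to the homogeneous setting by computing the covariant-derivative quantities $r_{00}$ and $s_0$ intrinsically in terms of the Lie bracket on $\mathfrak{g}$. The first step is to invoke Theorem \ref{thm3.1} (for the square metric; Theorem \ref{thm3.2} will handle the Randers-changed case in the companion argument): since $F$ is $G$-invariant, the underlying Riemannian metric $\alpha$ is $G$-invariant and the vector field $X$ corresponding to $\beta$ is $G$-invariant. In particular the Riemannian length $b=\lVert\beta\rVert_\alpha$ is constant on $M$, so by the cited fact from \cite{XCheZShe2009} we have $r_0+s_0=0$ and the $S$-curvature reduces to the shorter form \eqref{S-Curv2}, namely $S=-\dfrac{\Phi}{2\alpha\Delta^2}\bigl(r_{00}-2\alpha Q s_0\bigr)$.

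Next I would evaluate everything at the origin $H\in G/H$, identifying $T_H(G/H)$ with $\mathfrak{k}$ as in the preliminaries, and use the standard formula for the Levi-Civita connection of an invariant metric together with invariance of $X$. The key computation is the standard identity (valid for a Killing field, which $X$ is because it is invariant under the transitive isometry group) relating the covariant derivative $b_{i|j}$ to the bracket: for $y\in\mathfrak{k}$ one gets $b_{i|j}y^iy^j = -\langle[v,y]_{\mathfrak{k}},y\rangle$-type expressions. Concretely, since $X$ is Killing, $b_{i|j}$ is skew, so $r_{ij}=0$, hence $r_{00}=0$ and $r_0=0$; and $s_{ij}=b_{i|j}$ gives $s_0 = s_iy^i$ with $s_j = b^i s_{ij}$, which unwinds to $\alpha Q\langle[v,y]_{\mathfrak{k}},v\rangle$ after using $\langle X,y\rangle=\beta(y)$ and the invariance relation. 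Substituting $r_{00}=0$ and the bracket expression for $s_0$ into \eqref{S-Curv2} yields
\begin{equation*}
S(H,y)= \frac{\Phi}{2\alpha\Delta^2}\Bigl(\langle[v,y]_{\mathfrak{k}},y\rangle + \alpha Q\,\langle[v,y]_{\mathfrak{k}},v\rangle\Bigr),
\end{equation*}
which is exactly \eqref{e10}; note the overall sign flips because $-2\alpha Q s_0$ already carries the bracket with the needed sign, and the $r_{00}$ term that would have contributed $\langle[v,y]_{\mathfrak{k}},y\rangle$ comes from the Killing-field expansion of $s_{00}$-type terms rather than $r_{00}$ itself — I would be careful here to track which piece of $b_{i|j}y^iy^j$ survives.

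The main obstacle I anticipate is precisely this bookkeeping: translating $r_{00}$ and $s_0$, which are defined via the Chern (or Levi-Civita, since they agree on the $\alpha$-part here) covariant derivative of $\beta$, into the algebraic bracket language on $\mathfrak{g}=\mathfrak{h}+\mathfrak{k}$. This requires the explicit formula for the Levi-Civita connection of a $G$-invariant metric on a reductive homogeneous space (the Nomizu formula), then restricting to the reductive complement $\mathfrak{k}$ and using $\mathrm{Ad}(h)$-invariance of both $\alpha$ and $v$ to kill the $\mathfrak{h}$-components appropriately. Once that dictionary is in place, the substitution into \eqref{S-Curv2} is purely mechanical. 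I would also remark that since this theorem is quoted from \cite{GK2019}, the cleanest exposition is to cite that derivation and simply indicate the two inputs — constancy of $b$ from Theorem \ref{thm3.1}/\ref{thm3.2}, and the Killing/invariance computation of $r_{00}$ and $s_0$ — rather than reproduce the full connection calculation.
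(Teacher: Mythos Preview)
First, note that the paper does not prove Theorem~\ref{thmS}; it is explicitly recalled from \cite{GK2019}. So there is no in-paper proof to compare against, only the derivation in the cited reference, which proceeds by computing $r_{00}$ and $s_0$ at the origin via the Levi--Civita connection of the invariant Riemannian metric $\alpha$ on the reductive space $G/H$.

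Your proposal contains a genuine gap. The assertion that $X$ is Killing ``because it is invariant under the transitive isometry group'' is false, and this is precisely why your bookkeeping does not close. A $G$-invariant vector field on $G/H$ (i.e.\ one satisfying $g_*X=X$ for all $g\in G$) is \emph{not} the same thing as a Killing field of $\alpha$. Killing fields arise as fundamental vector fields of the isometric $G$-action; invariant vector fields correspond to $\mathrm{Ad}(H)$-fixed elements of $\mathfrak{k}$. Already on a Lie group with a left-invariant but not bi-invariant metric, left-invariant vector fields are $G$-invariant yet not Killing. Consequently $b_{i|j}$ need not be skew, $r_{ij}\ne 0$ in general, and $r_{00}$ does \emph{not} vanish.

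In fact the term $\langle[v,y]_{\mathfrak{k}},y\rangle$ in \eqref{e10} comes precisely from $r_{00}$. Using the Nomizu formula for the Levi--Civita connection of the invariant metric $\alpha$ on the reductive homogeneous space, one finds at the origin that $r_{00}=-\langle[v,y]_{\mathfrak{k}},y\rangle$ and $s_0=\tfrac{1}{2}\langle[v,y]_{\mathfrak{k}},v\rangle$; substituting these into \eqref{S-Curv2} gives \eqref{e10} directly. Your outline already has the right shape (reduce to \eqref{S-Curv2} via constancy of $b$, then translate $r_{00}$ and $s_0$ into bracket language), but the Killing shortcut must be replaced by the honest connection computation. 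Your own caveat ``I would be careful here to track which piece of $b_{i|j}y^iy^j$ survives'' is exactly the place where the argument currently breaks: both the symmetric and antisymmetric parts of $b_{i|j}$ survive and contribute the two displayed terms.
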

	Now, we establish a formula for $S$-curvature of homogeneous Finsler spaces  with square metric.

	\begin{theorem}{\label{t1}}
		Let $G/H$ be reductive  homogeneous Finsler space  with a decomposition of the Lie algebra $\mathfrak{g} = \mathfrak{h} +\mathfrak{k}$, and  $F= \dfrac{(\alpha+\beta)^2}{\alpha}$   be a $G$-invariant square metric on $G/H$. Then the $S$-curvature  is given by 
		
		\begin{equation}{\label{ee1}}
		S(H,y)= \left[\dfrac{\splitdfrac{-6 ns^3  +2\left(1+n +(2n-1)b^2\right)s}{+3\left(n+1 \right)s^2-\left(1+n\right)(1+2b^2)}}{\left( 1-3s^2+2b^2\right)^2 } \right]\left(\dfrac{2}{1-s} \left\langle \left[v, y\right] _{\mathfrak k}, v \right\rangle+\dfrac{1}{\alpha}\left\langle \left[ v, y\right] _{\mathfrak k}, y \right\rangle\right), 
		\end{equation}
		
		where $v \in \mathfrak{k}$ corresponds to the 1-form $\beta$ and $\mathfrak{k}$ is identified with the tangent space $T_{H}\left( G/H\right) $ of $G/H$ at the origin $H$.
	\end{theorem}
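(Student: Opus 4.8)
The plan is to specialize the general $S$-curvature formula for homogeneous $(\alpha,\beta)$-metrics, Theorem~\ref{thmS}, to the particular choice $\phi(s) = 1 + s^2 + 2s = (1+s)^2$ coming from the square metric $F = (\alpha+\beta)^2/\alpha$. By Theorem~\ref{thm3.1}, $\alpha$ is $G$-invariant and the vector field $X$ (equivalently, $v \in \mathfrak{k}$) corresponding to $\beta$ is $G$-invariant, so the Riemannian length $b = \lVert\beta\rVert_\alpha$ is constant on $G/H$; this is exactly the hypothesis under which formula (\ref{e10}) applies. Hence the only task is to compute the scalar coefficients $Q$, $\Delta$, $\Phi$ appearing in (\ref{e10}) for this $\phi$, and then rearrange $\dfrac{\Phi}{2\alpha\Delta^2}\big(\langle[v,y]_{\mathfrak k},y\rangle + \alpha Q\langle[v,y]_{\mathfrak k},v\rangle\big)$ into the form displayed in (\ref{ee1}).

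The key computational steps, in order, are as follows. First, from $\phi = 1+s^2+2s$ compute $\phi' = 2s+2 = 2(1+s)$ and $\phi'' = 2$. Then $\phi - s\phi' = 1+s^2+2s - 2s^2 - 2s = 1 - s^2$, so $Q = \dfrac{\phi'}{\phi - s\phi'} = \dfrac{2(1+s)}{1-s^2} = \dfrac{2}{1-s}$; this already explains the factor $\dfrac{2}{1-s}$ multiplying $\langle[v,y]_{\mathfrak k},v\rangle$ in (\ref{ee1}), since that term is $\alpha Q \langle[v,y]_{\mathfrak k},v\rangle$ with the overall $\dfrac{1}{\alpha}$ pulled out. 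Next compute $Q' = \dfrac{2}{(1-s)^2}$ and $Q'' = \dfrac{4}{(1-s)^3}$. Then $\Delta = 1 + sQ + (b^2-s^2)Q' = 1 + \dfrac{2s}{1-s} + \dfrac{2(b^2-s^2)}{(1-s)^2}$; putting everything over $(1-s)^2$ gives $\Delta = \dfrac{(1-s)^2 + 2s(1-s) + 2(b^2-s^2)}{(1-s)^2} = \dfrac{1 - s^2 + 2b^2 - 2s^2}{(1-s)^2}$, i.e. $\Delta = \dfrac{1 - 3s^2 + 2b^2}{(1-s)^2}$, which accounts for the denominator $(1-3s^2+2b^2)^2$ in (\ref{ee1}) once $\Delta^2$ is formed. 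Finally compute $sQ' - Q = \dfrac{2s}{(1-s)^2} - \dfrac{2}{1-s} = \dfrac{2s - 2(1-s)}{(1-s)^2} = \dfrac{4s-2}{(1-s)^2}$, then substitute into $\Phi = (sQ'-Q)(n\Delta + 1 + sQ) - (b^2-s^2)(1+sQ)Q''$, using $1 + sQ = \dfrac{1+s}{1-s}$ and the expression for $\Delta$ above. Collecting over the common denominator $(1-s)^3$ (or $(1-s)^4$) and simplifying the numerator should produce, after cancelling a factor of $(1-s)$ appropriately, the bracketed expression $\dfrac{-6ns^3 + 3(n+1)s^2 + 2(1+n+(2n-1)b^2)s - (1+n)(1+2b^2)}{(1-3s^2+2b^2)^2}$. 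The factor $\dfrac{1}{2\alpha\Delta^2}$ combines with $\Phi$: the two powers of $(1-s)^2$ from $\Delta^2$ in the denominator meet the powers of $(1-s)$ in $\Phi$, leaving the stated closed form. A sign check is needed because Theorem~\ref{thmS} as quoted writes $S(H,y) = +\dfrac{\Phi}{2\alpha\Delta^2}(\cdots)$ whereas the local formula (\ref{S-Curv2}) has a minus sign; I would keep the convention of (\ref{e10}) throughout so that (\ref{ee1}) follows directly.

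I would present the proof as: invoke Theorem~\ref{thm3.1} to get constancy of $b$ and hence applicability of (\ref{e10}); then substitute $\phi(s) = 1+s^2+2s$ and display the intermediate quantities $Q = \dfrac{2}{1-s}$, $\Delta = \dfrac{1-3s^2+2b^2}{(1-s)^2}$, $\Phi = \dfrac{\text{(cubic in }s)}{(1-s)^3}$; and finally assemble $\dfrac{\Phi}{2\alpha\Delta^2}$ and factor the $\dfrac{1}{\alpha}$ out of the bracket containing $\langle[v,y]_{\mathfrak k},y\rangle$ to match (\ref{ee1}).

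The main obstacle is purely the algebraic bookkeeping in expanding $\Phi$: it is a product of three nontrivial rational functions of $s$ (with $b^2$ a parameter), and getting the numerator polynomial to collapse to the claimed cubic $-6ns^3 + 3(n+1)s^2 + 2(1+n+(2n-1)b^2)s - (1+n)(1+2b^2)$ requires careful tracking of the $n$-dependent and $b^2$-dependent terms and a cancellation of a common $(1-s)$ factor between $\Phi$ and a power of $\Delta$ in the denominator. There is no conceptual difficulty beyond this; the reductive-homogeneous-space input is entirely packaged in Theorem~\ref{thmS}, and the only genuine verification is that the rational-function simplification goes through exactly as stated. I would double-check the final polynomial by evaluating at a convenient value such as $s=0$ (where the bracket should reduce to $\dfrac{-(1+n)(1+2b^2)}{(1+2b^2)^2} = \dfrac{-(1+n)}{1+2b^2}$) and confirming this agrees with direct substitution $s=0$ into $\dfrac{\Phi}{2\alpha\Delta^2}$, which is a cheap consistency test for the whole computation.
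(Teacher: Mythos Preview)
Your proposal is correct and follows essentially the same route as the paper: both proofs specialize the general formula (\ref{e10}) of Theorem~\ref{thmS} by computing $Q=\dfrac{2}{1-s}$, $Q'=\dfrac{2}{(1-s)^2}$, $Q''=\dfrac{4}{(1-s)^3}$, $\Delta=\dfrac{1-3s^2+2b^2}{(1-s)^2}$ and then $\Phi$, and substitute. One small bookkeeping correction: $\Phi$ lives over $(1-s)^4$, not $(1-s)^3$, and no stray $(1-s)$ factor needs to be cancelled---the $(1-s)^4$ in the numerator of $\Delta^2$ cancels exactly against the $(1-s)^4$ in the denominator of $\Phi$, while the factor of $2$ in the numerator of $\Phi$ cancels the $2$ in $\dfrac{\Phi}{2\alpha\Delta^2}$, leaving precisely the bracketed rational function in (\ref{ee1}).
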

	\begin{proof}
		\noindent For square metric
		$F=  \alpha \phi(s), \text{\ where} \ \ \phi(s)= 1+s^2+2s,$
		the entities  written in the equation (\ref{S-Curv1}) take the values as follows:
		\begin{align*}
		Q=&\dfrac{\phi'}{\phi-s \phi'}= \dfrac{2}{1-s},\\
		Q' =& \dfrac{2}{(1-s)^2},\\
		Q'' =&\dfrac{4}{(1-s)^3},\\
		\Delta=& 1+sQ+\left(b^2-s^2\right)Q'\\
		=&1+ s\left( \dfrac{2}{1-s}\right) + \left( b^2-s^2\right) \dfrac{2}{(1-s)^2}
		= \dfrac{1-3s^2+2b^2}{(1-s)^2}, 
		\end{align*} 
		\begin{align*} 
		\Phi=& \left( sQ'-Q\right) \left(1+ n\Delta + sQ\right)+ \left(s^2-b^2 \right) \left(1+sQ \right)Q''\\
		=&\left( \dfrac{2s}{(1-s)^2}-\dfrac{2}{1-s}\right)\left\{1+ \dfrac{n-3ns^2+2nb^2}{(1-s)^2} +\dfrac{2s}{1-s}\right\}
		+\left(s^2-b^2\right)\left\{ 1 +\dfrac{2s}{1-s}\right\} \left( \dfrac{4}{(1-s)^3}\right) \\
		%=& \dfrac{2}{(1-s)^4}\biggl\{ -6 ns^3 +3\left(n+1 \right)s^2 +2\left(1+n +(2n-1)b^2\right)s -\left(1+n\right)(1+2b^2)\biggr\}.\\
		=&\dfrac{2\biggl\{ -6 ns^3 +3\left(n+1 \right)s^2 +2\left(1+n +(2n-1)b^2\right)s -\left(1+n\right)(1+2b^2)\biggr\}}{(1-s)^4}.
		\end{align*} 
		After substituting these values in equation (\ref{e10}), we get the formula \ref{ee1} for   $S$-curvature of homogeneous Finsler space with square  metric.
	\end{proof}
	%	\noindent Following, we have an application of Theorem \ref{t1}. 
	\begin{cor}
		Let $G/H$ be reductive  homogeneous Finsler space  with a decomposition of the Lie algebra $\mathfrak{g} = \mathfrak{h} +\mathfrak{k}$, and  $F= \dfrac{(\alpha+\beta)^2}{\alpha}$   be a $G$-invariant square metric on $G/H$.  Then $(G/H, F)$ has isotropic $S$-curvature if and only if  it has vanishing $S$-curvature.
	\end{cor}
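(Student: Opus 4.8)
The plan is to treat the two implications separately. That vanishing $S$-curvature implies isotropic $S$-curvature is immediate: if $S\equiv 0$ then $S(x,y)=(n+1)\bigl(0\cdot F(x,y)\bigr)$, which is the defining property in Definition \ref{IsoSCurv} with $c\equiv 0$. So the content is the converse, and I would prove it by evaluating the explicit formula (\ref{ee1}) of Theorem \ref{t1} along one cleverly chosen direction.

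Suppose then that $(G/H,F)$ has isotropic $S$-curvature, so there is a smooth function $c$ on $G/H$ with $S(x,y)=(n+1)c(x)F(x,y)$ for all $x$ and $y$. If $\beta=0$, the metric is just the Riemannian metric $\alpha$ and $S$ vanishes identically, so we may assume $\beta\neq 0$, i.e.\ $b:=\lVert\beta\rVert_\alpha\in(0,1)$; let $v\in\mathfrak{k}\cong T_H(G/H)$ be the corresponding nonzero invariant vector (Theorem \ref{thm3.1}). Now take $y=v$ in (\ref{ee1}): since $[v,v]=0$, hence $[v,v]_{\mathfrak{k}}=0$, both inner products in the second factor vanish, while the bracketed prefactor, evaluated at $s=b$, is finite since its denominator $(1-3b^2+2b^2)^2=(1-b^2)^2\neq 0$; therefore $S(H,v)=0$. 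On the other hand $s=\beta(v)/\alpha(v)=\lVert v\rVert_\alpha=b$, so with $\phi(s)=(1+s)^2$ we get $F(H,v)=\alpha(v)\phi(b)=b(1+b)^2>0$ (positivity also being guaranteed by Shen's Lemma \ref{Shenlemma}). Comparing with $S(H,v)=(n+1)c(H)F(H,v)$ forces $c(H)=0$, and hence $S(H,y)=0$ for all $y\in T_H(G/H)$.

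To finish I would pass from the origin to all of $G/H$ using homogeneity: both $F$ and the $S$-curvature are $G$-invariant, so $c(x)=S(x,y)/\bigl((n+1)F(x,y)\bigr)$ is a $G$-invariant function on the homogeneous space $G/H$, therefore constant and equal to $c(H)=0$; thus $S\equiv 0$. The argument is short, so there is no serious obstacle to anticipate; the points that require a little care are the preliminary reduction to $\beta\neq 0$ (so that $v\neq 0$ and ``$y=v$'' is an admissible direction) and the positivity $F(H,v)>0$ that licenses the final cancellation. It is worth remarking that the substitution $y=v$ equally annihilates the second factor of the general formula (\ref{e10}), so the same proof yields the analogous statement for the Randers-changed square metric of Theorem \ref{thm3.2}.
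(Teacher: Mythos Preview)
Your proof is correct and follows essentially the same route as the paper: substitute $y=v$ in (\ref{ee1}) so that $[v,y]_{\mathfrak{k}}=0$ forces $S(H,v)=0$, deduce $c(H)=0$ from $F(H,v)>0$, and then invoke homogeneity to get $S\equiv 0$. Your write-up is in fact more careful than the paper's, since you explicitly treat the degenerate case $\beta=0$, check that the denominator $(1-b^2)^2$ does not vanish, and verify $F(H,v)>0$ before dividing.
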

	\begin{proof}  Converse part is obvious.
		For  necessary part, suppose $G/H$ has isotropic $S$-curvature, then 
		$$ S(x,y)= (n+1) c(x) F(y) , \ \ x \in G/H, \   y\in T_x(G/H).$$
		Taking $x=H$ and $y = v$ in the equation (\ref{ee1}), we get $c(H)=0.$\\ 
		Consequently $S(H,y)=0 \ \forall \ y \in T_H(G/H). $ \\
		Since $F$ is a homogeneous metric, we have $S=0$ everywhere. \\
	\end{proof}
	Now, we establish a formula for  S-curvature of  homogeneous Finsler space with Randers changes square $(\alpha, \beta)$-metric.\\
	
	\begin{theorem}{\label{t.1}}
		Let $G/H$ be reductive  homogeneous Finsler space  with a decomposition of the Lie algebra $\mathfrak{g} = \mathfrak{h} +\mathfrak{k}$, and  $F= \dfrac{(\alpha+\beta)^2}{\alpha}+\beta$   be a $G$-invariant Randers changed square metric on $G/H$. Then the $S$-curvature  is given by 
		\begin{equation}{\label{e.e1}}
		\begin{split}
		S(H,y)
		=&\left[ \dfrac{\splitdfrac{-12{s}^{5}n+(-27n+9){s}^{4}+ (8 n{b}^{2}+4n -4{b}^{2}  +16){s}^{3}} {+(18n{b}^{2} +18n -18{b}^{2}  +18){s}^{2}+ -12{b}^{2}s-3-6{b}^{2} -6n{b}^{2} -3n}} {2 \left( -3{s}^{2}+1+ 2{b}^{2} \right)  \left( 1 -2{s}^{2} -3{s}^{4}+3s -9{s}^{3} +2{b}^{2} +2{b}^{2}{s}^{2} +6{b}^{2}s\right)}\right] 
		%=&\dfrac{1}{2} \left( 3s^4+9s^3-(2b^2-2)s^2-(6b^2+3)s-2b^2-1\right)^{-2}
		%\bigg\{
		% -12ns^7 +(9-63n)s^6\\+&(8nb^2-4b^2-89n+43)s^5 +(42nb^2-30b^2+3n+75)s^4
		%+(62nb^2-70b^2+58n+70)s^3\\& +(12nb^2-60b^2+15n+15)s^2 -(18nb^2+30b^2+9n+9) s\\& -(6nb^2+6b^2+3n+3)
		%\biggr\}
		\biggl(\dfrac{2s+3}{1-s^2} \left\langle \left[v, y\right] _{\mathfrak k}, v \right\rangle\\&+\dfrac{1}{\alpha}\left\langle \left[ v, y\right] _{\mathfrak k}, y \right\rangle\biggr),
		\end{split}
		\end{equation}
		where $v \in \mathfrak{k}$ corresponds to the 1-form $\beta$ and $\mathfrak{k}$ is identified with the tangent space $T_{H}\left( G/H\right) $ of $G/H$ at the origin $H$.
	\end{theorem}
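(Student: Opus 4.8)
The plan is to specialize the general homogeneous $S$-curvature formula \eqref{e10} of Theorem \ref{thmS} to the metric $F=\alpha\phi(s)$ with $\phi(s)=1+3s+s^{2}$. By Theorem \ref{thm3.2} the vector $v\in\mathfrak{k}$ corresponding to $\beta$ is $G$-invariant, hence $b=\lVert\beta\rVert_{\alpha}$ is constant and \eqref{e10} applies verbatim; consequently the whole proof reduces to evaluating the scalar quantities $Q$, $\Delta$, $\Phi$ of \eqref{S-Curv1} for this particular $\phi$ and substituting them into \eqref{e10}.

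First I would record the elementary data $\phi'=2s+3$, $\phi''=2$, together with the key simplification $\phi-s\phi'=1-s^{2}$. These give
\[ Q=\frac{\phi'}{\phi-s\phi'}=\frac{2s+3}{1-s^{2}},\qquad Q'=\frac{2(1+3s+s^{2})}{(1-s^{2})^{2}}=\frac{2\phi}{(1-s^{2})^{2}},\qquad Q''=\frac{2(3+6s+9s^{2}+2s^{3})}{(1-s^{2})^{3}}. \]
Clearing denominators in $\Delta=1+sQ+(b^{2}-s^{2})Q'$ yields a quartic in $s$ whose numerator factors as $(1+2b^{2}-3s^{2})(1+3s+s^{2})$, so that $\Delta=(1+2b^{2}-3s^{2})\phi/(1-s^{2})^{2}$. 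In the same computation one obtains the two identities that make everything collapse, namely $1+sQ=\phi/(1-s^{2})$ and $sQ'-Q=(4s^{3}+9s^{2}-3)/(1-s^{2})^{2}$.

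The heart of the argument is the evaluation of $\Phi=(sQ'-Q)(n\Delta+1+sQ)-(b^{2}-s^{2})(1+sQ)Q''$. Writing $n\Delta+1+sQ=\phi\big[(n+1)+2nb^{2}-(3n+1)s^{2}\big]/(1-s^{2})^{2}$, both summands acquire the common factor $\phi/(1-s^{2})^{4}$; expanding the products and collecting powers of $s$ then gives
\[ \Phi=\frac{\phi}{(1-s^{2})^{4}}\Big[-12ns^{5}+(9-27n)s^{4}+\big(4n+16+(8n-4)b^{2}\big)s^{3}+\big(18n+18+(18n-18)b^{2}\big)s^{2}-12b^{2}s-\big(3+3n+(6+6n)b^{2}\big)\Big]. \]
Finally I would assemble \eqref{e10}. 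Since $\Delta^{2}=(1+2b^{2}-3s^{2})^{2}\phi^{2}/(1-s^{2})^{4}$, the quotient $\Phi/(2\Delta^{2})$ loses the factor $(1-s^{2})^{4}$ and one power of $\phi$; using the quartic factorization once more in the form $(1+2b^{2}-3s^{2})^{2}\phi=(1+2b^{2}-3s^{2})\big(1+3s-2s^{2}-9s^{3}-3s^{4}+2b^{2}+6b^{2}s+2b^{2}s^{2}\big)$, and then using $Q=(2s+3)/(1-s^{2})$ to factor $\Phi/(2\Delta^{2})$ out of the two terms $\tfrac{1}{\alpha}\langle[v,y]_{\mathfrak{k}},y\rangle$ and $Q\langle[v,y]_{\mathfrak{k}},v\rangle$ appearing in \eqref{e10}, produces precisely the desired formula \eqref{e.e1}.

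I expect the only real obstacle to be the bookkeeping in the step computing $\Phi$: one must multiply a cubic by a quadratic in $s$ with coefficients linear in $n$ and $b^{2}$, subtract a quintic product, and check both the factorization of the numerator of $\Delta$ and the companion factorization of $(1+2b^{2}-3s^{2})^{2}\phi$. This is routine but error-prone, so the $s^{3}$, $s^{2}$ and constant coefficients require careful tracking; no geometric input beyond Theorems \ref{thmS} and \ref{thm3.2} is needed.
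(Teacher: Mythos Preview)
Your proposal is correct and follows the same route as the paper: compute $Q,\,Q',\,Q'',\,\Delta,\,\Phi$ for $\phi(s)=1+3s+s^{2}$ and substitute into \eqref{e10}. Your extra observation that $\phi$ is a common factor of the numerators of $\Delta$ and $\Phi$ (so that $\Phi$ is $\phi$ times a quintic rather than a raw degree-$7$ polynomial) is a genuine simplification over the paper's brute-force expansion, and it makes transparent the cancellation that reduces $\Phi/(2\Delta^{2})$ to the displayed denominator $(1+2b^{2}-3s^{2})\cdot(1+2b^{2}-3s^{2})\phi$; the paper simply asserts this final substitution without exhibiting the cancellation.
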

	\begin{proof}
		\noindent For Randers changed square $(\alpha, \beta)$-metric 
		$$F
		%	 =\dfrac{(\alpha+\beta)^2}{\alpha}+\beta=\alpha\left(  \left(1+s \right)^2+s\right)
		= \alpha \phi(s), \text{where} \ \ \phi(s)
		%	 =\alpha \left(1+s \right)^2+s
		=1+s^2+3s,$$
		the entities  written in the equation (\ref{S-Curv1}) take the values as follows:
		\begin{align*}
		Q=&\dfrac{\phi'}{\phi-s \phi'}= \dfrac{2s+3}{1-s^2},\
		Q' = \dfrac{2s^2+6s+2}{(1-s^2)^2},\\
		Q'' =&\dfrac{4s^3+18s^2+12s+6}{(1-s^2)^3},\\
		\Delta=& 1+sQ+\left(b^2-s^2\right)Q'\\
		=&1+ s\left( \dfrac{2s+3}{1-s^2}\right) + \left( b^2-s^2\right) \dfrac{2s^2+6s+2}{(1-s^2)^2}
		= \dfrac{-3s^4-9s^3+(2b^2-2)s^2+(6b^2+3)s+2b^2+1}{(1-s^2)^2},\\
		%	\end{align*} 
		%	\begin{align*} 
		\Phi=& \left( sQ'-Q\right) \left(1+ n\Delta + sQ\right)+ \left(s^2-b^2 \right) \left(1+sQ \right)Q''\\
		=&\left( \dfrac{2s^3+6s^2+2s}{(1-s^2)^2}-\dfrac{2s+3}{1-s^2}\right)\biggl\{1+ \dfrac{-3ns^4-9ns^3+(2nb^2-2n)s^2+(6nb^2+3n)s+2nb^2+n}{(1-s^2)^2}\\&+\dfrac{2s^2+3s}{1-s^2}\biggr\}
		+\left(s^2-b^2\right)\left\{ 1 +\dfrac{2s^2+3s}{1-s^2}\right\} \left( \dfrac{4s^3+18s^2+12s+6}{(1-s^2)^3}\right) \\
		%	\end{align*} 
		%	\begin{align*} 
		=&\dfrac{1}{(1-s^2)^4}\biggl\{ -(12n+4)s^7-(63n+21)s^6+(8nb^2-89n-27)s^5+(42nb^2+3n+15)s^4\\&+(62nb^2+58n+40)s^3 +(12nb^2+15n+9)s^2 -(18nb^2+9n+9) s -(6nb^2+3n+3)\biggr\}\\
		&+\dfrac{1}{(1-s^2)^4}\biggl\{ 4s^7+30s^6 +(70-4b^2)s^5+ (60-30b^2)s^4 +(30-70b^2)s^3 +(6-60b^2)s^2 -30b^2 s -6b^2\biggr\}.\\
		%	\end{align*} 
		%	\begin{align*} 
		=&\dfrac{1}{(1-s^2)^4}\biggl\{ -12ns^7 +(9-63n)s^6 +(8nb^2-4b^2-89n+43)s^5 +(42nb^2-30b^2+3n+75)s^4\\&
		+(62nb^2-70b^2+58n+70)s^3 +(12nb^2-60b^2+15n+15)s^2 -(18nb^2+30b^2+9n+9) s\\& -(6nb^2+6b^2+3n+3)\biggr\}
		\end{align*} 
		After substituting these values in equation (\ref{e10}), we get the formula \ref{e.e1} for   $S$-curvature of homogeneous Finsler space with Randers changed  square metric.
	\end{proof}
	%	\noindent Next, we have an application of Theorem \ref{t.1}. 
	\begin{cor}
		Let $G/H$ be reductive  homogeneous Finsler space  with a decomposition of the Lie algebra $\mathfrak{g} = \mathfrak{h} +\mathfrak{k}$, and  $F= \dfrac{(\alpha+\beta)^2}{\alpha}+\beta$   be a $G$-invariant Randers changed square metric on $G/H$.  Then $(G/H, F)$ has isotropic $S$-curvature if and only if  it has vanishing $S$-curvature.
	\end{cor}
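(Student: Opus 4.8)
The plan is to follow verbatim the strategy used for the square metric in the previous corollary, the only input being the explicit shape of formula~\eqref{e.e1}. The converse implication is immediate, since a metric with vanishing $S$-curvature trivially has isotropic $S$-curvature (take $c \equiv 0$ in Definition~\ref{IsoSCurv}). So the content lies entirely in the forward direction.

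First I would dispose of the degenerate case separately: if the $G$-invariant vector field $v \in \mathfrak k$ corresponding to $\beta$ vanishes, then $\beta = 0$, $F = \alpha$ is Riemannian, and $S \equiv 0$; hence we may assume $v \neq 0$, so that $F(v) > 0$. Now suppose $(G/H, F)$ has isotropic $S$-curvature, i.e. there is a smooth function $c$ on $G/H$ with
\[ S(x,y) = (n+1)\, c(x)\, F(y), \qquad x \in G/H,\ \ y \in T_x(G/H). \]
The key observation is that both summands inside the large parentheses on the right-hand side of~\eqref{e.e1} are proportional to $\langle [v,y]_{\mathfrak k}, \cdot \rangle$, which vanishes identically when $y = v$ because $[v,v] = 0$. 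Evaluating the isotropy identity at $x = H$ and $y = v$ therefore gives $(n+1)\, c(H)\, F(v) = S(H,v) = 0$, and since $F(v) > 0$ we conclude $c(H) = 0$.

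Substituting $c(H) = 0$ back into the isotropy identity yields $S(H,y) = 0$ for every $y \in T_H(G/H)$. Finally, since $F$ is $G$-invariant and $G = I(M,F)$ acts transitively on $M = G/H$, the $S$-curvature, being an isometry invariant, is unchanged along the orbit of $H$, so its vanishing at the origin forces $S \equiv 0$ on all of $M$.

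There is no genuine obstacle here: the entire argument reduces to the single algebraic remark that substituting $y = v$ annihilates the bracket factor common to the two terms of~\eqref{e.e1}, exactly as in the square-metric corollary. The only points that demand a word of care are the degenerate case $v = 0$ and the appeal to homogeneity to upgrade ``$S$ vanishes at $H$'' to ``$S$ vanishes everywhere''.
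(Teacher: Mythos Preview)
Your proposal is correct and follows essentially the same approach as the paper: evaluate the isotropy identity at $x=H$, $y=v$, use $[v,v]=0$ to kill the bracket terms in~\eqref{e.e1}, deduce $c(H)=0$, and then invoke homogeneity to globalize. The paper's own proof is terser and omits both the $v=0$ degenerate case and the justification that $F(v)>0$, which you have sensibly filled in.
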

	\begin{proof} Converse part is obvious.
		For  necessary part, suppose $G/H$ has isotropic $S$-curvature, then 
		$$ S(x,y)= (n+1) c(x) F(y) , \ \ x \in G/H, \   y\in T_x(G/H).$$
		Taking $x=H$ and $y = v$ in the equation (\ref{e.e1}), we get $c(H)=0.$\\ 
		Consequently $S(H,y)=0 \ \forall \ y \in T_H(G/H). $ \\
		Since $F$ is a homogeneous metric, we have $S=0$ everywhere. \\
	\end{proof}
	\section{Mean Berwald Curvature}
	\noindent There is another quantity \cite{ChernShenRFG} associated with $S$-curvature called Mean Berwald curvature. \\ Let $ E_{ij}=\dfrac{1}{2} \dfrac{\partial^2}{\partial y^i \partial y^j}S(x,y) = \dfrac{1}{2} \dfrac{\partial^2}{\partial y^i \partial y^j}\left( \dfrac{\partial G^m}{\partial y^m} \right)(x,y),$
	where $G^m$ are spray coefficients.  Then $ \mathcal{E}:=E_{ij} dx^i \otimes dx^j $ is a tensor on $TM\backslash \{0\},$ which we call $E$ tensor. $E$ tensor can also be viewed as a family of symmetric forms $ E_y:T_xM \times T_xM \longrightarrow \mathbb{R} $ defined as
	$$ E_y(u,v)=E_{ij}(x,y) u^i v^j, $$
	where $ u=u^i\dfrac{\partial}{\partial x^i}\arrowvert_x,\ \ v=v^i\dfrac{\partial}{\partial x^i}\arrowvert_x \ \in T_xM. $ Then the collection $ \{ E_y: y \in TM \backslash \{0\} \} $ is called $E$-curvature or mean Berwald curvature. \\
	
	In this section, we calculate  the mean Berwald curvature of the homogeneous Finsler space with afore said metrics.
	% We first discuss the notion of the mean Berwald curvature \cite{ChernShenRFG} of a Finsler space (M, F). For this, let
	%$$ E_{ij}(x, y)= \dfrac{1}{2}\dfrac{\partial^2S(x, y)}{\partial y^i \partial y^j}.$$
	%The $E$-tensor is a family of symmetric forms $E_y \colon T_xM \times T_x M \longrightarrow \mathbb{R}$ defined by $E_y(u,v)= E_{ij}(x,y) u^iv^j,$ where $u=u^i \dfrac{\partial}{\partial x^i}, \ v=v^i \dfrac{\partial}{\partial x^i} \in T_x M, \ x \in M.$ Then $ E=\big\{ E_y \ \colon \ y \in TM \backslash \{0\}  \big\}  $ is called the $E$-curvature or the mean Berwald curvature.
	To calculate  it, we need the following:\\
	%	We have $ y_{_i}=y^i,$ since $a_{ij}= \delta^i_j$ at the origin.
	At the origin, $a_{ij}= \delta^i_j,$ \\
	therefore $ y_{_i} =a_{ij}y^j = \delta^i_j y^j =y^i,$
	\begin{align*}
	\alpha_{_{y^i}} &=\dfrac{y_{_i}}{\alpha},\\
	\beta_{_{y^i}} &= b_i,\\
	s_{_{y^i}} &= \dfrac{\partial}{\partial y^i}\left( \dfrac{\beta}{\alpha}\right) = \dfrac{b_i \alpha - s y_{_i}}{\alpha^2},\\
	s_{_{y^i y^j}} &= \dfrac{\partial}{\partial y^j}\left( \dfrac{b_i \alpha - s y_{_i}}{\alpha^2}\right) \\
	&= \dfrac{\alpha^2\left\lbrace b_i \dfrac{y_{_j}}{\alpha}- \left( \dfrac{b_j \alpha -s y_{_j}}{\alpha^2}\right)y_{_i} - s \delta^i_j \right\rbrace - \left( b_i \alpha -s y_{_i}\right)2 \alpha \dfrac{y_{_j}}{\alpha}  }{\alpha^4}\\
	&= \dfrac{-\left( b_i y_{j}+b_j y_{i}\right) \alpha + 3s y_{i} y_j -\alpha^2 s \delta ^i_j }{\alpha ^4},
	\end{align*}
	
	Assuming	
	\begin{align*}
	\dfrac{ -6 ns^3  +3\left(n+1 \right)s^2+\left(2+2n +(4n-2)b^2\right)s -\left(1+n\right)(1+2b^2)}{\left( 1-3s^2+2b^2\right)^2 }=A
	\end{align*} in equation (\ref{ee1}), we find 
	$$ \dfrac{\partial A}{\partial y^j}=
	\dfrac {\splitdfrac{-18n{s}^{4}+ \left( 18n+18 \right) {s}^{3}+ \left(-18{b}^{2}+18 \right) {s}^{2}+ \left(-6n-12n{b}^{2}-6-12{b}^{2} \right)s }{+2+2{b}^{2}-4{b}^{4} +2n+ 8n{b}^{4} +8n{b}^{2}}}{ \left( 1-3\,{s}^{2}+2\,{b}^{2} \right) ^{3}}s_{y^j},$$
	and
	\begin{align*}
	%\text{and}\ \ 
	\dfrac{\partial^2 A}{\partial y^i \partial y^j} &=\dfrac{\partial}{\partial y^i}\left[\dfrac {\splitdfrac{-18n{s}^{4}+ \left( 18n+18 \right) {s}^{3}+ \left(-18{b}^{2}+18 \right) {s}^{2}+ \left(-6n-12n{b}^{2}-6-12{b}^{2} \right)s }{+2+2{b}^{2}-4{b}^{4} +2n+ 8n{b}^{4} +8n{b}^{2}}}{ \left( 1-3\,{s}^{2}+2\,{b}^{2} \right) ^{3}}\right] s_{y^j} \\
	&\ \ \ \ \ + \left[\dfrac {\splitdfrac{-18n{s}^{4}+ \left( 18n+18 \right) {s}^{3}+ \left(-18{b}^{2}+18 \right) {s}^{2}+ \left(-6n-12n{b}^{2}-6-12{b}^{2} \right)s }{+2+2{b}^{2}-4{b}^{4} +2n+ 8n{b}^{4} +8n{b}^{2}}}{ \left( 1-3\,{s}^{2}+2\,{b}^{2} \right) ^{3}}\right]s_{y^i y^j}
	\end{align*}
	\begin{align*}
	&=\left[\dfrac {\splitdfrac{\splitdfrac{6\biggl\{-18n{s}^{5}+ \left( 27+27n \right) {s}^{4}+ \left( -24n{b}^{2}-36{b}^{2}+36-12n \right) {s}^{3}} {+\left(-6n-12n{b}^{2}-12{b}^{2}-6 \right){s}^{2}+ \left(12{b}^{2}+6n+24n{b}^{2}-24{b}^{4}+24n{b}^{4}+12 \right) s}}{-1-n-4n{b}^{2}-4{b}^{2}-4n{b}^{4}-4{b}^{4}\biggr\}}}{ \left( 1-3{s}^{2}+2{b}^{2} \right) ^{4}}\right]s_{y^i} s_{y^j} \\
	&+ \left[\dfrac {\splitdfrac{-18n{s}^{4}+ \left( 18n+18 \right) {s}^{3}+ \left(-18{b}^{2}+18 \right) {s}^{2}+ \left(-6n-12n{b}^{2}-6-12{b}^{2} \right)s }{+2+2{b}^{2}-4{b}^{4} +2n+ 8n{b}^{4} +8n{b}^{2}}}{ \left( 1-3\,{s}^{2}+2\,{b}^{2} \right) ^{3}}\right]s_{y^i y^j}.
	\end{align*}
	
	%In following theorem, we give the formula for mean Berwald curvature of homogeneous Finsler space with square metric.
	\begin{theorem}
		Let $G/H$ be reductive  homogeneous Finsler space  with a decomposition of the Lie algebra $\mathfrak{g} = \mathfrak{h} +\mathfrak{k}$, and  $F= \dfrac{(\alpha+\beta)^2}{\alpha}$   be a $G$-invariant square metric on $G/H$.  Then the mean Berwald curvature of the homogeneous Finsler space with square metric  is given by 
		\begin{equation}{\label{ECurv_1}}
		\begin{split}
		E_{ij}(H,y)
		=& \dfrac{1}{2}\bigg[ \left( \dfrac{1}{\alpha}\dfrac{\partial^2 A}{\partial y^i \partial y^j}-\dfrac{y_i}{\alpha^3}\dfrac{\partial A}{\partial y^j}-\dfrac{y_j}{\alpha^3}\dfrac{\partial A}{\partial y^i}-\dfrac{A}{\alpha^3}\delta^j_{i}+\dfrac{3A}{\alpha^5}y_{i}y_j\right) \left\langle \left[ v, y\right] _{\mathfrak k}, y \right\rangle  \\
		& + \left( \dfrac{1}{\alpha}\dfrac{\partial A}{\partial y^j} - \dfrac{A } {\alpha^3}y_j\right)\bigg( \left\langle \left[ v, v_i\right] _{\mathfrak k}, y \right\rangle +\left\langle \left[ v, y\right] _{\mathfrak k}, v_i \right\rangle  \bigg) \\
		& +\left( \dfrac{1}{\alpha}\dfrac{\partial A}{\partial y^i}-\dfrac{A}{\alpha^3} y_i\right) \bigg( \left\langle \left[ v, v_j\right] _{\mathfrak k}, y \right\rangle +\left\langle \left[ v, y\right] _{\mathfrak k}, v_j \right\rangle  \bigg) \\
		& + \dfrac{A}{\alpha}\bigg( \left\langle \left[ v, v_j\right] _{\mathfrak k}, v_i \right\rangle +\left\langle \left[ v, v_i\right] _{\mathfrak k}, v_j \right\rangle  \bigg) 
		+
		\biggl\{ \dfrac{2}{1-s} \dfrac{\partial^2 A}{\partial y^i \partial y^j }
		+ \dfrac{2}{(1-s)^2}s_{y^i}\dfrac{\partial A}{\partial y^j} \\&
		+\dfrac{2}{(1-s)^2}s_{y^j}\dfrac{\partial A}{\partial y^i}
		+\dfrac{4A}{(1-s)^3}s_{y^i}s_{y^j}
		+\dfrac{2A}{(1-s)^2}s_{y^i y^j}\biggr\}
		\left\langle \left[v, y\right] _{\mathfrak k}, v\right\rangle \\&
		+\left\{ \dfrac{2}{1-s} \dfrac{\partial A}{\partial y^j} + \dfrac{2A}{(1-s)^2}s_{y^j} \right\} \left\langle \left[v, v_i\right] _{\mathfrak k}, v\right\rangle  
		+ \left\{ \dfrac{2}{1-s} \dfrac{\partial A}{\partial y^i} + \dfrac{2A}{(1-s)^2}s_{y^i}\right\} \left\langle \left[v, v_j\right] _{\mathfrak k}, v\right\rangle
		\bigg],
		\end{split}
		\end{equation}
		where $v \in \mathfrak{k}$ corresponds to the 1-form $\beta$ and $\mathfrak{k}$ is identified with the tangent space $T_{H}\left( G/H\right) $ of $G/H$ at the origin $H$.
	\end{theorem}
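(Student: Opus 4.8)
The plan is to differentiate the $S$-curvature formula $(\ref{ee1})$ of Theorem \ref{t1} twice in the fibre coordinates and evaluate at the origin. Abbreviating, as in the paragraph preceding the statement,
$$A=\dfrac{-6ns^{3}+3(n+1)s^{2}+\bigl(2+2n+(4n-2)b^{2}\bigr)s-(1+n)(1+2b^{2})}{\left(1-3s^{2}+2b^{2}\right)^{2}},$$
Theorem \ref{t1} reads
$$S(H,y)=A\left(\dfrac{2}{1-s}\,P(y)+\dfrac{1}{\alpha}\,R(y)\right),\qquad P(y):=\bigl\langle[v,y]_{\mathfrak k},v\bigr\rangle,\ \ R(y):=\bigl\langle[v,y]_{\mathfrak k},y\bigr\rangle,$$
and by definition $E_{ij}(H,y)=\tfrac12\,\partial^{2}S/\partial y^{i}\partial y^{j}$. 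Since $v$ is $G$-invariant (Theorem \ref{thm3.1}), $b=\alpha(X)$ is constant on $G/H$, so $b$ is a constant throughout the $y$-differentiation; at the origin we may further use $a_{ij}=\delta^{i}_{j}$, hence $y_{i}=y^{i}$.

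First I would record the first derivatives of the building blocks. Because $[v,\cdot]$ is linear and $\langle\,,\rangle$ bilinear, $\partial P/\partial y^{i}=\langle[v,v_{i}]_{\mathfrak k},v\rangle$ (so $P$ is linear in $y$, $\partial^{2}P=0$), while $\partial R/\partial y^{i}=\langle[v,v_{i}]_{\mathfrak k},y\rangle+\langle[v,y]_{\mathfrak k},v_{i}\rangle$ and $\partial^{2}R/\partial y^{i}\partial y^{j}=\langle[v,v_{j}]_{\mathfrak k},v_{i}\rangle+\langle[v,v_{i}]_{\mathfrak k},v_{j}\rangle$, where $v_{i}$ denotes the basis vector $\partial/\partial x^{i}|_{H}$ of $\mathfrak k\cong T_{H}(G/H)$. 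Together with $\partial(1/\alpha)/\partial y^{j}=-y_{j}/\alpha^{3}$, $\partial^{2}(1/\alpha)/\partial y^{i}\partial y^{j}=-\delta^{j}_{i}/\alpha^{3}+3y_{i}y_{j}/\alpha^{5}$, $\partial\bigl(2/(1-s)\bigr)/\partial y^{j}=\tfrac{2}{(1-s)^{2}}s_{y^{j}}$, $\partial^{2}\bigl(2/(1-s)\bigr)/\partial y^{i}\partial y^{j}=\tfrac{4}{(1-s)^{3}}s_{y^{i}}s_{y^{j}}+\tfrac{2}{(1-s)^{2}}s_{y^{i}y^{j}}$, the formulas for $s_{y^{i}}$ and $s_{y^{i}y^{j}}$ displayed above, and the pre-computed $\partial A/\partial y^{j}$ and $\partial^{2}A/\partial y^{i}\partial y^{j}$, all ingredients are in hand.

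Then I would apply the Leibniz rule to the two summands of $S$. For $\tfrac{1}{\alpha}AR$, collecting by the $y$-dependence of $R$ gives: the coefficient of $R$ equal to $\tfrac1\alpha A_{ij}-\tfrac{y_{i}}{\alpha^{3}}A_{j}-\tfrac{y_{j}}{\alpha^{3}}A_{i}-\tfrac{A}{\alpha^{3}}\delta^{j}_{i}+\tfrac{3A}{\alpha^{5}}y_{i}y_{j}$; the coefficients $\tfrac1\alpha A_{i}-\tfrac{A}{\alpha^{3}}y_{i}$ and $\tfrac1\alpha A_{j}-\tfrac{A}{\alpha^{3}}y_{j}$ of $\partial_{j}R$ and $\partial_{i}R$; and $\tfrac{A}{\alpha}$ times $\partial^{2}_{ij}R$. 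For $\tfrac{2}{1-s}AP$, since $\partial^{2}P=0$ the coefficient of $P$ is $\tfrac{2}{1-s}A_{ij}+\tfrac{2}{(1-s)^{2}}s_{y^{i}}A_{j}+\tfrac{2}{(1-s)^{2}}s_{y^{j}}A_{i}+\tfrac{4A}{(1-s)^{3}}s_{y^{i}}s_{y^{j}}+\tfrac{2A}{(1-s)^{2}}s_{y^{i}y^{j}}$, and the coefficients of $\partial_{i}P$ and $\partial_{j}P$ are $\tfrac{2}{1-s}A_{j}+\tfrac{2A}{(1-s)^{2}}s_{y^{j}}$ and $\tfrac{2}{1-s}A_{i}+\tfrac{2A}{(1-s)^{2}}s_{y^{i}}$. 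Substituting the expressions for $\partial_{i}P,\partial_{i}R,\partial^{2}_{ij}R$, evaluating at the origin, and multiplying by $\tfrac12$ yields precisely $(\ref{ECurv_1})$.

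The argument is computational rather than conceptual once organized as above; the only real care needed is bookkeeping — keeping the symmetrization in $i$ and $j$ straight so that no cross term from the triple-product Leibniz rule is dropped or double-counted, and evaluating consistently at the origin so that $\alpha_{y^{i}}=y_{i}/\alpha$ and $a_{ij}=\delta^{i}_{j}$ may be used. I expect the collection of the $\langle[v,y]_{\mathfrak k},v\rangle$-block (the five terms in the large brace) to be the most error-prone part, since it mixes the $\partial A$ data with the $s_{y^{i}}$, $s_{y^{i}y^{j}}$ data.
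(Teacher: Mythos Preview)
Your proposal is correct and follows essentially the same route as the paper: split $S(H,y)$ into the two summands $\phi_1=\dfrac{A}{\alpha}\langle[v,y]_{\mathfrak k},y\rangle$ and $\psi_1=\dfrac{2A}{1-s}\langle[v,y]_{\mathfrak k},v\rangle$, then apply the Leibniz rule twice to each, using the pre-computed derivatives of $\alpha^{-1}$, $(1-s)^{-1}$, $s$, and $A$ at the origin. Your additional remark that $b$ is constant (via the $G$-invariance of $v$ from Theorem~\ref{thm3.1}), so that only $s$ carries $y$-dependence inside $A$, is a point the paper uses implicitly but does not spell out.
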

	\begin{proof} 
		From equation (\ref{ee1}),  we can write $S$- curvature at the origin as follows:
		%$$ S(H,y) =  \dfrac{A}{\alpha} \left\langle \left[ v, y\right] _{\mathfrak k}, y \right\rangle + \dfrac{2A}{1-s}  \left\langle \left[v, y\right] _{\mathfrak k}, v\right\rangle.  $$
		%
		%Further, we can write 
		$$ S(H,y)= \phi_1+ \psi_1, $$ 
		where $$\phi_1=\dfrac{A}{\alpha} \left\langle \left[ v, y\right] _{\mathfrak k}, y \right\rangle\ \ \text{and} \ \ \psi_1= \dfrac{2A}{1-s}  \left\langle \left[v, y\right] _{\mathfrak k}, v\right\rangle  . $$
		Therefore,  mean Berwald curvature is 
		\begin{equation}{\label{MBC1}}
		E_{ij} = \dfrac{1}{2}\dfrac{\partial^2 S}{\partial y^i \partial y^j}= \dfrac{1}{2}\left( \dfrac{\partial^2 \phi_1}{\partial y^i \partial y^j}+ \dfrac{\partial^2 \psi_1}{\partial y^i \partial y^j}\right),
		\end{equation}
		where\ $ \dfrac{\partial^2 \phi_1}{\partial y^i \partial y^j}$ and $ \dfrac{\partial^2 \psi_1}{\partial y^i \partial y^j} $ are calculated as follows:
		\begin{align*}
		\dfrac{\partial \phi_1}{\partial y^j} &= \dfrac{\partial}{\partial y^j}\left(\dfrac{A}{\alpha} \left\langle \left[ v, y\right] _{\mathfrak k}, y \right\rangle  \right) \\
		&=\left( \dfrac{1}{\alpha}\dfrac{\partial A}{\partial y^j} - \dfrac{A}{\alpha^2} \dfrac{y_j}{\alpha}\right) \left\langle \left[ v, y\right] _{\mathfrak k}, y \right\rangle + \dfrac{A}{\alpha}\bigg( \left\langle \left[ v, v_j\right] _{\mathfrak k}, y \right\rangle + \left\langle \left[ v, y\right] _{\mathfrak k}, v_j \right\rangle  \bigg), 
		\end{align*}
		\begin{align*}
		\dfrac{\partial^2 \phi_1}{\partial y^i \partial y^j} =& \dfrac{\partial}{\partial y^i}\left\lbrace \left( \dfrac{1}{\alpha}\dfrac{\partial A}{\partial y^j} - \dfrac{Ay_j}{\alpha^3} \right) \left\langle \left[ v, y\right] _{\mathfrak k}, y \right\rangle + \dfrac{A}{\alpha}\bigg( \left\langle \left[ v, v_j\right] _{\mathfrak k}, y \right\rangle + \left\langle \left[ v, y\right] _{\mathfrak k}, v_j \right\rangle  \bigg) \right\rbrace \\
		=& \left( \dfrac{1}{\alpha}\dfrac{\partial^2 A}{\partial y^i \partial y^j}-\dfrac{y_i}{\alpha^3}\dfrac{\partial A}{\partial y^j}-\dfrac{y_j}{\alpha^3}\dfrac{\partial A}{\partial y^i}-\dfrac{A}{\alpha^3}\delta^j_{i}+\dfrac{3A}{\alpha^5}y_{i}y_j\right) \left\langle \left[ v, y\right] _{\mathfrak k}, y \right\rangle  \\
		& + \left( \dfrac{1}{\alpha}\dfrac{\partial A}{\partial y^j} - \dfrac{A } {\alpha^3}y_j\right)\bigg( \left\langle \left[ v, v_i\right] _{\mathfrak k}, y \right\rangle +\left\langle \left[ v, y\right] _{\mathfrak k}, v_i \right\rangle  \bigg) \\
		& +\left( \dfrac{1}{\alpha}\dfrac{\partial A}{\partial y^i}-\dfrac{A}{\alpha^3} y_i\right) \bigg( \left\langle \left[ v, v_j\right] _{\mathfrak k}, y \right\rangle +\left\langle \left[ v, y\right] _{\mathfrak k}, v_j \right\rangle  \bigg) 
		+ \dfrac{A}{\alpha}\bigg( \left\langle \left[ v, v_j\right] _{\mathfrak k}, v_i \right\rangle +\left\langle \left[ v, v_i\right] _{\mathfrak k}, v_j \right\rangle  \bigg),  
		\end{align*}
		and 
		\begin{align*}
		\dfrac{\partial \psi_1}{\partial y^j} &=\dfrac{\partial}{\partial y^j}\left(  \dfrac{2A}{1-s} \left\langle \left[v, y\right] _{\mathfrak k}, v\right\rangle \right)\\
		&= \left\{ \dfrac{2}{1-s} \dfrac{\partial A}{\partial y^j} + \dfrac{2A}{(1-s)^2}s_{y^j} \right\} \left\langle \left[v, y\right] _{\mathfrak k}, v\right\rangle 
		+ \dfrac{2A}{1-s} \left\langle \left[v, v_j\right] _{\mathfrak k}, v\right\rangle ,
		\end{align*}
		\begin{align*}
		&	\dfrac{\partial^2 \psi_1}{\partial y^i \partial y^j }
		= \dfrac{\partial}{\partial y^i}\left[\left\{ \dfrac{2}{1-s} \dfrac{\partial A}{\partial y^j} + \dfrac{2A}{(1-s)^2}s_{y^j} \right\} \left\langle \left[v, y\right] _{\mathfrak k}, v\right\rangle 
		+ \dfrac{2A}{1-s} \left\langle \left[v, v_j\right] _{\mathfrak k}, v\right\rangle \right] \\
		&	= \biggl\{ \dfrac{2}{1-s} \dfrac{\partial^2 A}{\partial y^i \partial y^j }
		+ \dfrac{2}{(1-s)^2}s_{y^i}\dfrac{\partial A}{\partial y^j} 
		+\dfrac{2}{(1-s)^2}s_{y^j}\dfrac{\partial A}{\partial y^i}
		+\dfrac{4A}{(1-s)^3}s_{y^i}s_{y^j}
		+\dfrac{2A}{(1-s)^2}s_{y^i y^j}\biggr\}
		\left\langle \left[v, y\right] _{\mathfrak k}, v\right\rangle \\&
		+\left\{ \dfrac{2}{1-s} \dfrac{\partial A}{\partial y^j} + \dfrac{2A}{(1-s)^2}s_{y^j} \right\} \left\langle \left[v, v_i\right] _{\mathfrak k}, v\right\rangle  
		+ \left\{ \dfrac{2}{1-s} \dfrac{\partial A}{\partial y^i} + \dfrac{2A}{(1-s)^2}s_{y^i}\right\} \left\langle \left[v, v_j\right] _{\mathfrak k}, v\right\rangle.
		\end{align*}
		Substituting  all above values in equation (\ref{MBC1}), we get the  formula \ref{ECurv_1}. 
		%	Above computations   lead to the following theorem:
	\end{proof}
	
	%	For Randers changed square metric, 
	Assuming
	$$\dfrac{\splitdfrac{-12{s}^{5}n+(-27n+9){s}^{4}+ (8 n{b}^{2}+4n -4{b}^{2}  +16){s}^{3}} {+(18n{b}^{2} +18n -18{b}^{2}  +18){s}^{2}+ -12{b}^{2}s-3-6{b}^{2} -6n{b}^{2} -3n}} {2 \left( -3{s}^{2}+1+ 2{b}^{2} \right)  \left( 1 -2{s}^{2} -3{s}^{4}+3s -9{s}^{3} +2{b}^{2} +2{b}^{2}{s}^{2} +6{b}^{2}s\right)}=B$$ 
	in equation (\ref{e.e1}), we find 
	\begin{align*}
	\dfrac{\partial B}{\partial y^j}
	=&\dfrac{1}{2} \left(-3 {s}^{2}+1+2 {b}^{2}
	\right)^{-1}  \left( -3 {s}^{4}-9 {s}^{3}+ \left( 2 {b}^{2}-2 \right) {
		s}^{2}+ \left( 3+6 {b}^{2} \right) s+1+2 {b}^{2} \right) ^{-2}
	\biggl\{
	-36 n{s}^{8}\\
	&+ \left( -162 n+54
	\right) {s}^{7}+ \left( -207 n-36 {b}^{2}+225 \right) {s}^{6}+
	\left( -252 {b}^{2}+90 n-36 n{b}^{2}+522 \right) {s}^{5}\\
	&+\left(-488 {b}^{2}+631-80 n{b}^{2}+199 n-8 {b}^{4}+16 n{b}^{4} \right) {
		s}^{4}\\
	&+ \left( -30 n+186+96 n{b}^{4}-408 {b}^{2}-120 n{b}^{2}-48 
	{b}^{4} \right) {s}^{3}\\
	&+ \left( -228 {b}^{2}+156 n{b}^{4}-108 {b}^{
		4}-33-60 n{b}^{2}-69 n \right) {s}^{2}\\
	&+ \left( 96 n{b}^{4}+6 n+6-
	48 {b}^{4}-12 {b}^{2}+60 n{b}^{2} \right) s+9+24 {b}^{2}+36 n{b}^
	{4}+12 {b}^{4}+36 n{b}^{2}+9 n
	\biggr\}
	s_{y^j},
	\end{align*}
	and

	\begin{align*}
	\dfrac{\partial^2 B}{\partial y^i \partial y^j} 
	=&\dfrac{1}{2}\dfrac{\partial}{\partial y^i}\biggl[ \left(-3 {s}^{2}+1+2 {b}^{2}
	\right)^{-1}  \left( -3 {s}^{4}-9 {s}^{3}+ \left( 2 {b}^{2}-2 \right) {
		s}^{2}+ \left( 3+6 {b}^{2} \right) s+1+2 {b}^{2} \right) ^{-2}
	\biggl\{
	-36 n{s}^{8}\\
	&+ \left( -162 n+54
	\right) {s}^{7}+ \left( -207 n-36 {b}^{2}+225 \right) {s}^{6}+
	\left( -252 {b}^{2}+90 n-36 n{b}^{2}+522 \right) {s}^{5}\\
	&+\left(-488 {b}^{2}+631-80 n{b}^{2}+199 n-8 {b}^{4}+16 n{b}^{4} \right) {
		s}^{4}\\
	&+ \left( -30 n+186+96 n{b}^{4}-408 {b}^{2}-120 n{b}^{2}-48 
	{b}^{4} \right) {s}^{3}\\
	&+ \left( -228 {b}^{2}+156 n{b}^{4}-108 {b}^{
		4}-33-60 n{b}^{2}-69 n \right) {s}^{2}\\
	&+ \left( 96 n{b}^{4}+6 n+6-
	48 {b}^{4}-12 {b}^{2}+60 n{b}^{2} \right) s+9+24 {b}^{2}+36 n{b}^
	{4}+12 {b}^{4}+36 n{b}^{2}+9 n
	\biggr\}\biggr] s_{y^j}\\
	%\end{align*}
	%\begin{align*}
	+&\dfrac{1}{2} \left(-3 {s}^{2}+1+2 {b}^{2}
	\right)^{-1}  \left( -3 {s}^{4}-9 {s}^{3}+ \left( 2 {b}^{2}-2 \right) {
		s}^{2}+ \left( 3+6 {b}^{2} \right) s+1+2 {b}^{2} \right) ^{-2}
	\biggl\{
	-36 n{s}^{8}\\
	&+ \left( -162 n+54
	\right) {s}^{7}+ \left( -207 n-36 {b}^{2}+225 \right) {s}^{6}+
	\left( -252 {b}^{2}+90 n-36 n{b}^{2}+522 \right) {s}^{5}\\
	&+\left(-488 {b}^{2}+631-80 n{b}^{2}+199 n-8 {b}^{4}+16 n{b}^{4} \right) {
		s}^{4}\\
	&+ \left( -30 n+186+96 n{b}^{4}-408 {b}^{2}-120 n{b}^{2}-48 
	{b}^{4} \right) {s}^{3}\\
	&+ \left( -228 {b}^{2}+156 n{b}^{4}-108 {b}^{
		4}-33-60 n{b}^{2}-69 n \right) {s}^{2}\\
	&+ \left( 96 n{b}^{4}+6 n+6-
	48 {b}^{4}-12 {b}^{2}+60 n{b}^{2} \right) s+9+24 {b}^{2}+36 n{b}^
	{4}+12 {b}^{4}+36 n{b}^{2}+9 n
	\biggr\}s_{y^i y^j}\\
	%\end{align*}
	%\begin{align*}
	=&\left( -3 {s}^{2}+1+2 {b}^{2} \right)^{-1}
	\left( -3 {s}
	^{4}-9 {s}^{3}+ \left( 2 {b}^{2}-2 \right) {s}^{2}+ \left( 3+6 {b}^
	{2} \right) s+1+2 {b}^{2} \right) ^{-3}
	\bigg\{
	-108 {s}^{11}n\\
	&+ \left( 243-729 n \right) {s}^{10}+ \left( 
	1593-216 {b}^{2}-144 n{b}^{2}-1935 n \right) {s}^{9}\\&
	+ \left( 5940-
	2052 {b}^{2}-1404 n{b}^{2}-1512 n \right) {s}^{8}\\&
	+ \left( -144 {b}
	^{4}-6570 {b}^{2}+144 n{b}^{4}+1440 n-4338 n{b}^{2}+13356 \right) 
	{s}^{7}\\&
	+ \left( 15894-10188 {b}^{2}+1260 n{b}^{4}+1638 n-1332 {b}^
	{4}-6300 n{b}^{2} \right) {s}^{6}\\&
	+ \left( 8706-4884 {b}^{4}-4254 n{
		b}^{2}-1122 n-8574 {b}^{2}+3756 n{b}^{4} \right) {s}^{5}\\&
	+ \left(3132-7560 {b}^{4}-1080 n+5400 n{b}^{4}-3834 {b}^{2}+54 n{b}^{2}
	\right) {s}^{4}\\&
	+ \left( 2634 n{b}^{2}+3960 n{b}^{4}+1700-4680 {b}^
	{4}+40 {b}^{6}+40 n{b}^{6}+332 n+402 {b}^{2} \right) {s}^{3}\\&
	+\left( 1476 n{b}^{4}+1368 n{b}^{2}+720 {b}^{2}+315 n-1116 {b}^{4
	}+639 \right) {s}^{2}\\&
	+ \left(-90 n{b}^{2}-15 n-162 {b}^{2}-180 n{
		b}^{4}+21-468 {b}^{4}-120 n{b}^{6}-120 {b}^{6} \right) s\\&
	-24-24 n -120 n{b}^{6}-120 {b}^{6}-126 {b}^{2}-216 n{b}^{4}-126 n{b}^{2}-216 {b}^{4}
	\biggr\}s_{y^i}s_{y^j}\\
	\end{align*}
	\begin{align*}
	+&\dfrac{1}{2} \left(-3 {s}^{2}+1+2 {b}^{2}
	\right)^{-1}  \left( -3 {s}^{4}-9 {s}^{3}+ \left( 2 {b}^{2}-2 \right) {
		s}^{2}+ \left( 3+6 {b}^{2} \right) s+1+2 {b}^{2} \right) ^{-2}
	\biggl\{
	-36 n{s}^{8}\\
	&+ \left( -162 n+54
	\right) {s}^{7}+ \left( -207 n-36 {b}^{2}+225 \right) {s}^{6}+
	\left( -252 {b}^{2}+90 n-36 n{b}^{2}+522 \right) {s}^{5}\\
	&+\left(-488 {b}^{2}+631-80 n{b}^{2}+199 n-8 {b}^{4}+16 n{b}^{4} \right) {
		s}^{4}\\
	&+ \left( -30 n+186+96 n{b}^{4}-408 {b}^{2}-120 n{b}^{2}-48 
	{b}^{4} \right) {s}^{3}\\
	&+ \left( -228 {b}^{2}+156 n{b}^{4}-108 {b}^{
		4}-33-60 n{b}^{2}-69 n \right) {s}^{2}\\
	&+ \left( 96 n{b}^{4}+6 n+6-
	48 {b}^{4}-12 {b}^{2}+60 n{b}^{2} \right) s+9+24 {b}^{2}+36 n{b}^
	{4}+12 {b}^{4}+36 n{b}^{2}+9 n
	\biggr\}s_{y^i y^j}
	\end{align*}

	%In following theorem, we give the formula for mean Berwald curvature of homogeneous Finsler space with Randers changed square metric.
	\begin{theorem}
		Let $G/H$ be reductive  homogeneous Finsler space  with a decomposition of the Lie algebra $\mathfrak{g} = \mathfrak{h} +\mathfrak{k}$, and  $F= \dfrac{(\alpha+\beta)^2}{\alpha}+\beta$   be a $G$-invariant Randers changed square metric on $G/H$. Then the mean Berwald curvature of the homogeneous Finsler space with Randers changed  square metric  is given by 
		\begin{equation}{\label{ECurv_2}}
		\begin{split}
		E_{ij}(H,y)
		=& \dfrac{1}{2}\bigg[ \left( \dfrac{1}{\alpha}\dfrac{\partial^2 B}{\partial y^i \partial y^j}-\dfrac{y_i}{\alpha^3}\dfrac{\partial B}{\partial y^j}-\dfrac{y_j}{\alpha^3}\dfrac{\partial B}{\partial y^i}-\dfrac{B}{\alpha^3}\delta^j_{i}+\dfrac{3B}{\alpha^5}y_{i}y_j\right) \left\langle \left[ v, y\right] _{\mathfrak k}, y \right\rangle  \\
		& + \left( \dfrac{1}{\alpha}\dfrac{\partial B}{\partial y^j} - \dfrac{B } {\alpha^3}y_j\right)\bigg( \left\langle \left[ v, v_i\right] _{\mathfrak k}, y \right\rangle +\left\langle \left[ v, y\right] _{\mathfrak k}, v_i \right\rangle  \bigg) \\
		& +\left( \dfrac{1}{\alpha}\dfrac{\partial B}{\partial y^i}-\dfrac{B}{\alpha^3} y_i\right) \bigg( \left\langle \left[ v, v_j\right] _{\mathfrak k}, y \right\rangle +\left\langle \left[ v, y\right] _{\mathfrak k}, v_j \right\rangle  \bigg) \\
		& + \dfrac{B}{\alpha}\bigg( \left\langle \left[ v, v_j\right] _{\mathfrak k}, v_i \right\rangle +\left\langle \left[ v, v_i\right] _{\mathfrak k}, v_j \right\rangle  \bigg) 
		+
		\biggl\{ \dfrac{2s+3}{1-s^2} \dfrac{\partial^2 B}{\partial y^i \partial y^j }
		+ \dfrac{2s^2+6s+2}{(1-s^2)^2}\ s_{y^i}\ \dfrac{\partial B}{\partial y^j} \\&
		+\dfrac{2s^2+6s+2}{(1-s^2)^2}\ s_{y^j}\ \dfrac{\partial B}{\partial y^i}
		+\dfrac{(4s^3+18s^2+12s+6)B}{(1-s^2)^3}s_{y^i}s_{y^j}
		+\dfrac{(2s^2+6s+2)B}{(1-s^2)^2}s_{y^i y^j}\biggr\}
		\left\langle \left[v, y\right] _{\mathfrak k}, v\right\rangle \\&
		+\left\{ \dfrac{2s+3}{1-s^2} \dfrac{\partial B}{\partial y^j} + \dfrac{(2s^2+6s+2)B}{(1-s^2)^2}s_{y^j} \right\} \left\langle \left[v, v_i\right] _{\mathfrak k}, v\right\rangle  \\
		&  + \left\{ \dfrac{2s+3}{1-s^2} \dfrac{\partial B}{\partial y^i} + \dfrac{(2s^2+6s+2)B}{(1-s^2)^2}s_{y^i}\right\} \left\langle \left[v, v_j\right] _{\mathfrak k}, v\right\rangle
		\bigg],
		\end{split}
		\end{equation}
		where $v \in \mathfrak{k}$ corresponds to the 1-form $\beta$ and $\mathfrak{k}$ is identified with the tangent space $T_{H}\left( G/H\right) $ of $G/H$ at the origin $H$.
	\end{theorem}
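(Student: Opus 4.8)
The plan is to mirror the computation carried out for the square metric in the proof of the preceding theorem, replacing the coefficient function $A$ by $B$ and the multiplier $\tfrac{2}{1-s}$ by $Q=\tfrac{2s+3}{1-s^{2}}$, the latter being precisely the $Q$ attached to $\phi(s)=1+s^{2}+3s$ computed in the proof of Theorem \ref{t.1}. First I would rewrite the $S$-curvature at the origin from \eqref{e.e1} as a sum of two pieces
$$ S(H,y)=\phi_{2}+\psi_{2},\qquad \phi_{2}=\dfrac{B}{\alpha}\,\big\langle [v,y]_{\mathfrak k},y\big\rangle,\qquad \psi_{2}=\dfrac{(2s+3)B}{1-s^{2}}\,\big\langle [v,y]_{\mathfrak k},v\big\rangle, $$
so that by definition $E_{ij}(H,y)=\tfrac12\big(\partial^{2}\phi_{2}/\partial y^{i}\partial y^{j}+\partial^{2}\psi_{2}/\partial y^{i}\partial y^{j}\big)$.

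Next I would differentiate each piece twice, using the product and quotient rules together with the elementary identities valid at the origin recorded just before the statement: $a_{ij}=\delta_{ij}$, $y_{i}=y^{i}$, $\alpha_{y^{i}}=y_{i}/\alpha$, $\beta_{y^{i}}=b_{i}$, the formulas for $s_{y^{i}}$ and $s_{y^{i}y^{j}}$, and the bilinearity facts $\partial_{y^{j}}\big\langle[v,y]_{\mathfrak k},y\big\rangle=\big\langle[v,v_{j}]_{\mathfrak k},y\big\rangle+\big\langle[v,y]_{\mathfrak k},v_{j}\big\rangle$ and $\partial_{y^{j}}\big\langle[v,y]_{\mathfrak k},v\big\rangle=\big\langle[v,v_{j}]_{\mathfrak k},v\big\rangle$, where $v_{j}$ denotes the $j$-th basis vector of $\mathfrak k$. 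One also needs $\partial_{y^{i}}(y_{j}/\alpha^{3})=\delta^{i}_{j}/\alpha^{3}-3y_{i}y_{j}/\alpha^{5}$. For $\phi_{2}$ this reproduces the first four lines of \eqref{ECurv_2} verbatim with $B$ in place of $A$; the only genuinely new computation is the $\partial^{2}B/\partial y^{i}\partial y^{j}$ and $\partial B/\partial y^{j}$ appearing there, which are exactly the long rational expressions in $s$ and $b^{2}$ displayed above (they arise by the chain rule since $B$ depends on $y$ only through $s=\beta/\alpha$).

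For $\psi_{2}$ the extra structure is the factor $Q=(2s+3)/(1-s^{2})$; differentiating it produces $Q'=(2s^{2}+6s+2)/(1-s^{2})^{2}$ and, on the second differentiation, $Q''=(4s^{3}+18s^{2}+12s+6)/(1-s^{2})^{3}$, together with the mixed terms $Q'\,s_{y^{i}}\,\partial_{y^{j}}B$, $Q'\,s_{y^{j}}\,\partial_{y^{i}}B$, $Q''\,B\,s_{y^{i}}s_{y^{j}}$ and $Q'\,B\,s_{y^{i}y^{j}}$, paired with the appropriate bracket pairings $\big\langle[v,y]_{\mathfrak k},v\big\rangle$, $\big\langle[v,v_{i}]_{\mathfrak k},v\big\rangle$, $\big\langle[v,v_{j}]_{\mathfrak k},v\big\rangle$. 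Assembling these with the $\phi_{2}$-contribution and multiplying by $\tfrac12$ yields \eqref{ECurv_2}. The main obstacle is purely organizational: there is no conceptual difficulty, but the bookkeeping of the many product-rule terms — and in particular correctly carrying the unwieldy $\partial B/\partial y^{j}$ and $\partial^{2}B/\partial y^{i}\partial y^{j}$ through the chain rule while consistently using the at-origin normalizations — is where all the effort lies; care must be taken that every occurrence of $Q$, $Q'$, $Q''$ is the one associated with $\phi(s)=1+s^{2}+3s$ and not with the plain square metric.
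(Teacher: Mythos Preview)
Your proposal is correct and follows essentially the same route as the paper: the paper likewise splits $S(H,y)=\phi_{2}+\psi_{2}$ with $\phi_{2}=\dfrac{B}{\alpha}\langle[v,y]_{\mathfrak k},y\rangle$ and $\psi_{2}=\dfrac{(2s+3)B}{1-s^{2}}\langle[v,y]_{\mathfrak k},v\rangle$, then computes $\partial^{2}\phi_{2}/\partial y^{i}\partial y^{j}$ and $\partial^{2}\psi_{2}/\partial y^{i}\partial y^{j}$ via the product rule using exactly the same $Q$, $Q'$, $Q''$ values and the at-origin identities you list. The organizational caveats you flag are the only content of the argument, just as in the paper.
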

	\begin{proof}
		From equation (\ref{e.e1}),  we can write $S$- curvature at the origin as follows  
		%$$ S(H,y) = 
		% \dfrac{2s+3}{1-s^2}\ B\ \left\langle \left[v, y\right] _{\mathfrak k}, v \right\rangle+\dfrac{B}{\alpha}\left\langle \left[ v, y\right] _{\mathfrak k}, y \right\rangle $$
		$$ S(H,y)= \phi_2+ \psi_2, $$ 
		where $$ \phi_2=\dfrac{B}{\alpha} \left\langle \left[ v, y\right] _{\mathfrak k}, y \right\rangle\ \ \text{and} \ \
		\psi_2= \dfrac{2s+3}{1-s^2}\ B\  \left\langle \left[v, y\right] _{\mathfrak k}, v\right\rangle  . $$
		Therefore,  mean Berwald curvature is 
		\begin{equation}{\label{MBC2}}
		E_{ij}= \dfrac{1}{2}\dfrac{\partial^2 S}{\partial y^i \partial y^j}= \dfrac{1}{2}\left( \dfrac{\partial^2 \phi_2}{\partial y^i \partial y^j}+ \dfrac{\partial^2 \psi_2}{\partial y^i \partial y^j}\right),
		\end{equation}
		where\ $ \dfrac{\partial^2 \phi_2}{\partial y^i \partial y^j}$ and $ \dfrac{\partial^2 \psi_2}{\partial y^i \partial y^j} $ are calculated as follows:
		\begin{align*}
		\dfrac{\partial \phi_2}{\partial y^j} &= \dfrac{\partial}{\partial y^j}\left(\dfrac{B}{\alpha} \left\langle \left[ v, y\right] _{\mathfrak k}, y \right\rangle  \right) \\
		&=\left( \dfrac{1}{\alpha}\dfrac{\partial B}{\partial y^j} - \dfrac{B}{\alpha^2} \dfrac{y_j}{\alpha}\right) \left\langle \left[ v, y\right] _{\mathfrak k}, y \right\rangle + \dfrac{B}{\alpha}\bigg( \left\langle \left[ v, v_j\right] _{\mathfrak k}, y \right\rangle + \left\langle \left[ v, y\right] _{\mathfrak k}, v_j \right\rangle  \bigg), 
		\end{align*}
		\begin{align*}
		\dfrac{\partial^2 \phi_2}{\partial y^i \partial y^j} =& \dfrac{\partial}{\partial y^i}\left\lbrace \left( \dfrac{1}{\alpha}\dfrac{\partial B}{\partial y^j} - \dfrac{By_j}{\alpha^3} \right) \left\langle \left[ v, y\right] _{\mathfrak k}, y \right\rangle + \dfrac{B}{\alpha}\bigg( \left\langle \left[ v, v_j\right] _{\mathfrak k}, y \right\rangle + \left\langle \left[ v, y\right] _{\mathfrak k}, v_j \right\rangle  \bigg) \right\rbrace \\
		=& \left( \dfrac{1}{\alpha}\dfrac{\partial^2 B}{\partial y^i \partial y^j}-\dfrac{y_i}{\alpha^3}\dfrac{\partial B}{\partial y^j}-\dfrac{y_j}{\alpha^3}\dfrac{\partial B}{\partial y^i}-\dfrac{B}{\alpha^3}\delta^j_{i}+\dfrac{3B}{\alpha^5}y_{i}y_j\right) \left\langle \left[ v, y\right] _{\mathfrak k}, y \right\rangle  \\
		& + \left( \dfrac{1}{\alpha}\dfrac{\partial B}{\partial y^j} - \dfrac{B } {\alpha^3}y_j\right)\bigg( \left\langle \left[ v, v_i\right] _{\mathfrak k}, y \right\rangle +\left\langle \left[ v, y\right] _{\mathfrak k}, v_i \right\rangle  \bigg) \\
		& +\left( \dfrac{1}{\alpha}\dfrac{\partial B}{\partial y^i}-\dfrac{B}{\alpha^3} y_i\right) \bigg( \left\langle \left[ v, v_j\right] _{\mathfrak k}, y \right\rangle +\left\langle \left[ v, y\right] _{\mathfrak k}, v_j \right\rangle  \bigg)
		+ \dfrac{B}{\alpha}\bigg( \left\langle \left[ v, v_j\right] _{\mathfrak k}, v_i \right\rangle +\left\langle \left[ v, v_i\right] _{\mathfrak k}, v_j \right\rangle  \bigg),  
		\end{align*}
		and 
		\begin{align*}
		\dfrac{\partial \psi_2}{\partial y^j} &=\dfrac{\partial}{\partial y^j}\left(  \dfrac{(2s+3)B}{1-s^2} \left\langle \left[v, y\right] _{\mathfrak k}, v\right\rangle \right)\\
		&= \left\{ \dfrac{2s+3}{1-s^2} \dfrac{\partial B}{\partial y^j} + \dfrac{(2s^2+6s+2)B}{(1-s^2)^2}s_{y^j} \right\} \left\langle \left[v, y\right] _{\mathfrak k}, v\right\rangle 
		+ \dfrac{(2s+3)B}{1-s^2} \left\langle \left[v, v_j\right] _{\mathfrak k}, v\right\rangle ,
		\end{align*}
		\begin{align*}
		\dfrac{\partial^2 \psi_2}{\partial y^i \partial y^j }
		=& \dfrac{\partial}{\partial y^i}\left[\left\{ \dfrac{2s+3}{1-s^2} \dfrac{\partial B}{\partial y^j} 
		+ \dfrac{(2s^2+6s+2)B}{(1-s^2)^2}s_{y^j} \right\} \left\langle \left[v, y\right] _{\mathfrak k}, v\right\rangle 
		+ \dfrac{(2s+3)B}{1-s^2} \left\langle \left[v, v_j\right] _{\mathfrak k}, v\right\rangle \right] \\
		=& \biggl\{ \dfrac{2s+3}{1-s^2} \dfrac{\partial^2 B}{\partial y^i \partial y^j }
		+ \dfrac{2s^2+6s+2}{(1-s^2)^2}\ s_{y^i}\ \dfrac{\partial B}{\partial y^j} 
		+\dfrac{2s^2+6s+2}{(1-s^2)^2}\ s_{y^j}\ \dfrac{\partial B}{\partial y^i}\\&
		+\dfrac{(4s^3+18s^2+12s+6)B}{(1-s^2)^3}s_{y^i}s_{y^j}
		+\dfrac{(2s^2+6s+2)B}{(1-s^2)^2}s_{y^i y^j}\biggr\}
		\left\langle \left[v, y\right] _{\mathfrak k}, v\right\rangle \\&
		+\left\{ \dfrac{2s+3}{1-s^2} \dfrac{\partial B}{\partial y^j} + \dfrac{(2s^2+6s+2)B}{(1-s^2)^2}s_{y^j} \right\} \left\langle \left[v, v_i\right] _{\mathfrak k}, v\right\rangle  \\
		&  + \left\{ \dfrac{2s+3}{1-s^2} \dfrac{\partial B}{\partial y^i} + \dfrac{(2s^2+6s+2)B}{(1-s^2)^2}s_{y^i}\right\} \left\langle \left[v, v_j\right] _{\mathfrak k}, v\right\rangle.
		\end{align*}
		Substituting  all above values in equation (\ref{MBC2}), we get the  formula \ref{ECurv_2}. 
		%	Above computations   lead to the following theorem:
	\end{proof}
	
	%\section{Conclusions}
	
	\section*{Acknowledgments}
	First author is  thankful to Central University of Punjab,  Bathinda  for providing financial assistance as a Research Seed Money grant  via the letter  no. CUPB/CC/17/369.  Second author  is very much thankful to UGC for providing financial assistance in terms of JRF fellowship via letter with  Sr. No. 2061641032 and  Ref. No. 19/06/2016(i)EU-V.

\end{document}